\newcommand{\C}{\mathbb C}\newcommand{\Z}{\mathbb Z}
\newcommand{\U}{\mathsf{U}}
\newcommand{\Imaginaary}{\Im m}  
  \newcommand{\e}{\operatorname{e}}
\newcommand{\Vol}{\mathbf{V}\!\operatorname{ol}} 
\newcommand{\overbar}[1]{\mkern 1.5mu\overline{\mkern-1.5mu#1\mkern-1.5mu}\mkern 1.5mu} \newcommand{\scalforms}[1]{\left<#1\right>_{\Omega^p}}
\numberwithin{equation}{section}  \makeatletter\@addtoreset{equation}{section} \makeatletter \@addtoreset{equation}{section}
\makeatletter\@addtoreset{equation}{subsection}
\newcommand{\abs}[1]{\left\vert#1\right\vert} \newcommand{\set}[1]{\left\{#1\right\}}\newcommand{\scal}[1]{\left<#1\right>}
\newcommand{\mixedSAutomorphic}{{\mathcal{M}}^{\nu,\mu}_{\rho,\tau}}
\newcommand{\mixedSDelta}{{\Delta}_{\tau}^{\nu,\mu}}
\newcommand{\frakh}{\mathfrak{h}}
\newcommand{\JiMuNu}{J^{\nu,\mu}_{\rho,\tau}} 
\newcommand{\B}{B_{\tau}^{\mu,\nu}}
\newcommand{\varphim}{\varphi_{\tau}^{\nu,\mu}}
\newtheorem {theorem}{Theorem}[section] \newtheorem {definition}[theorem]{Definition} 
 \newtheorem {keylemma}[theorem]{Key Lemma}
\newtheorem {proposition}[theorem]{Proposition} \newtheorem {remark}[theorem]{Remark} \newtheorem {example}[theorem]{Example}
\newtheorem {corollary}[theorem]{Corollary}
\begin{document}
	\title[A lifting theorem for planar mixed automorphic functions and applications]  
	{ A lifting theorem for planar mixed automorphic functions and applications
	}
	\author{Aymane El Fardi} 
	\address{(A.E.) Univ Paris Est Creteil, CNRS, LAMA, F-94010 Creteil, France}
		\address{$\&$ Univ Gustave Eiffel, LAMA, F-77447 Marne-la-Vallée, France}
 
		\email{a.elfardi@gmail.com }
		
		\author{Allal Ghanmi} \author{Lahcen Imlal}	
	\address{(A.G. $\&$ L.I) Analysis, P.D.E. $\&$ Spectral Geometry, Lab. M.I.A.-S.I.,  CeReMAR, \newline
		Department of Mathematics, P.O. Box 1014,  Faculty of Sciences, 
		 Mohammed V University in Rabat, Morocco}
	\email{allal.ghanmi@um5.ac.ma}
	\email{imlalimlal@gmail.com}
	\maketitle
	\begin{abstract}
We deal with the concrete spectral analysis of an invariant magnetic Schr\"odinger operator acting on one dimensional $L^2$-mixed automorphic functions with respect to given equivariant pair $ (\rho,\tau) $  and given discrete subgroup of the semi-direct group $ \U(1)\ltimes\C$.  This will be carried out by means of a lifting theorem to the classical automorphic functions associated with specific pseudo-character. We also provided a partial characterization of the equivariant pairs relative to our setting and discuss possible generalization to higher dimensions.
	\end{abstract}

\section{Introduction}
Mixed automorphic functions (MAFs) generalize elliptic modular forms and can be tracked back to Stiller \cite{stiller-special}. They arise naturally in the description of holomorphic forms of highest degree on family of abelian varieties presented as Riemann surfaces 
\cite{hunt1985mixed}, and are defined on a manifold $M$, on which a discrete group $ \Gamma $ acts properly and discontinuously, as functions satisfying the functional equation  $ f(\gamma\cdot z) = J(\gamma,z)J(\rho(\gamma),\tau(z)) f(z)$, for given equivariant pair $(\rho,\tau)$ (see Definition~\ref{def:equivariant} below)
 and automorphic factor $J(\gamma,z)$.
The case of the hyperbolic plane was extensively studied by Min Ho Lee and his co-workers, whom analyzed different aspects of associated theory. See \cite{lee-mixed} and the bibliographical references therein. 
 The planar euclidean case $M= \C$ with respect to full rank lattice $L$ of $ (\C,+) $ and given automorphic factor 
$ j^\nu(\gamma,z):= \chi(\gamma) e^{i\nu\Imaginaary\scal{z,\gamma}},$  
 was considered in  \cite{ghanmi2011characterization,elgourari2011spectral}.
 Here $\nu >0$, $ \scal{z,w}=z\overbar{w} $ is the usual hermitian inner product on $\C $,  and $  \chi$  is a given mapping (pseudo-character) on $L$ with values in the unitary group $U(1)$ satisfying $ \chi(\gamma \gamma')  = e^{i \nu \Imaginaary \scal{\gamma' ,\gamma }} \chi(\gamma) \chi(\gamma') $ for all  $ \gamma,\gamma'\in\Gamma $.

 In the present paper, we emphasize to continue in this direction by investigating the general case of arbitrary discrete subgroup of $\Gamma$ the semi-direct group $\U(1)\ltimes \C$ of the unitary group $U(1)$ and $\C$.
Our main purpose is the concrete spectral properties of the invariant magnetic Schr\"odinger operator 
$$H_{\theta^{\nu,\mu}_\tau} =(d+ \operatorname{ext} \ \theta^{\nu,\mu}_\tau)^{*}(d+\operatorname{ext} \ \theta^{\nu,\mu}_\tau)$$
associated to the potential vector
\begin{equation}\label{eq:theta2:Chap5}
	\theta^{\nu,\mu}_\tau(z) : ={-} \frac{\nu}{2}(\bar z dz -  zd\bar z) -\frac{\mu}{2}(\overline{\tau}d\tau - \tau d\overline{\tau})  , 
\end{equation}
and acting on $\Gamma$-mixed automorphic functions with respect to  given equivariant pair $(\rho,\tau)$ 
belonging to the Hilbert space $ L^{2,\nu}_{\mu,\tau}(\C/\Gamma)$ of  square integrable functions with respect to the scalar product 
$$ \scal{f,g}_{\nu,\mu,\tau} := \int_{\C/\Gamma} f(z) \overline{g(z)} e^{-\nu|z|^2 - \mu (|E\tau(z)|^2  - |\overline{E}\tau(z)|^2 )} dg_\tau(z)$$
involving the special gaussian weight function 
$$
dg_\tau(z):= e^{-\nu|z|^2 - \mu (|E\tau(z)|^2  - |\overline{E}\tau(z)|^2 )} dxdy; \, z=x+iy.$$
Here $E =z \frac{\partial}{\partial z}$ is the complex Euler operator.
 Mainly, we show that the construction in  \cite{elgourari2011spectral} remains 
valid and the spectrum of $H_{\theta^{\nu,\mu}_\tau}$ is discrete and coincides with the Landau levels of the Landau Hamiltonian studied in \cite{ghanmiintissar}. We also derive the explicit closed expression of their $L^2$-eigenprojector when acting on the free Hilbert space  $L^2(\C,d\lambda)$.

It should be mentioned here that a direct description of such properties are hard to handle  by solving the associated eigenvalue problem. However, we establish a lifting theorem, generalizing the particular one obtained in \cite{elgourari2011spectral}, and providing an isomorphic transformation between the eigenvalue problem of our constructed invariant Schr\"odinger operator on MAFs and the eigenvalue problem for the Landau Hamiltanian with constant magnetic field on the space of classical automorphic functions. 
As a side note, our considered class of MAFs are far to be confused with the ones  defined in \cite{Elfardi2017a,Elfardi2018prepreint} or in \cite{elgourari2011spectral,ghanmi2011characterization}.
Extension to the orbifold $\Gamma\backslash \C^n $ for given  discrete subgroup $ \Gamma$ of $\U(n)\ltimes \C^n $, acting on $\C^n$ by the mappings $ g.z = Az + b $ for $g=[A,b]\in \U(n)\ltimes \C^n$, i.e., translations and rotations,
is also discussed. We show that an analog of one dimensional lifting theorem fails in higher dimension without additional assumptions on the considered equivariant pair. 
In fact, we provide sufficient and necessary conditions on the equivariant mapping to the corresponding magnetic field be constant.
 
The organization of the paper is as follow. In Section 2, we discuss some basic properties of equivariant pairs $ (\rho,\tau) $, including their characterizations like the one for which  $ \rho $ separates variables ($ \rho=\tilde{\rho}\ltimes\tau $). Section 3 is devoted to give a brief review of the space of mixed automorphic functions, and to provide a necessary and sufficient condition ensuring its non-triviality.
Section~4 deals with the geometrical realization of the magnetic Schr\"odinger operator, and we investigate their variance property with respect to the automorphic action, as well as the connection to the Landau Hamiltonian. 
The lifting theorem is discussed and proved in Section 5, and next used to describe and derive  the concrete spectral analysis of the constructed Laplacian in Section 6. 
  Last but not least, our attempt to generalize the main theorem to higher dimensions is presented in Section~7.

\section{On $G$-equivariant pairs}

 We denote by $G$ the group $ \U(1)\ltimes \C $, that we realize in matrix representation as 
$$ G:= \U(1)\ltimes \C =\set{g:=[a,b] \equiv \left(\begin {array} {c c} a & b \\ 0 & 1\end {array} \right ); \quad a\in \U(1) , ~ b\in \C}, $$
and let $ \Gamma $ be a discrete subgroup of it. The action of $G$ on the complex plane $\C$ is transitive and given by the holomorphic mappings $g\cdot z = a z +b$, where $g=[a,b]\in G$. 

\begin{definition} \label{def:equivariant} By $G$-equivariant pair $(\rho,\tau)$, 	we mean a $G$-endomorphism $\rho$ and a compatible mapping $\tau : \C \to \C$ in the sense that 
	\begin{equation}\label{EquivCond}
	\tau(g\cdot z)= \rho(g)\cdot\tau(z)
	\end{equation}
	holds true for every $ g\in G$ and $z\in\C$.
\end{definition}

The first main objective is an explicit characterization of such equivariant pairs. Thus, it is natural to look for how to exploit \eqref{EquivCond} in order to exhibit suitable maps $ \tau $ for fixed $ \rho $, or vice versa to determinate possible $ \rho $ when $ \tau $ is fixed. Such characterization seems to be far of being realizable without further assumptions. 
Thus, we digress into a partial characterization and provide such bridge between the choice of $ \rho $ and $ \tau $. Namely, we prove the following result.

\begin{theorem}\label{thm:equivariantEquivalence}
	Let $ \rho $ be a G-endomorphism and define $ \Xi_{\rho} := \set{\beta \in \C \ | \ G_{\beta} \supset \rho(G_0)} $, where $ G_x := \set{g\in G \ | \ g\cdot x=x} $. Then, a given mapping $ \tau : \C \longrightarrow \C $ satisfies the equivariant condition \eqref{EquivCond} if and only if
	\begin{equation}\label{eq:taubetaconstruction}
	\tau(z)= \tau_{\beta}(z) := \rho([1,z]) \cdot \beta
	\end{equation}
	for certain fixed $ \beta \in \Xi_{\rho} $ ($ \tau_{\beta}(0) = \beta $).
\end{theorem}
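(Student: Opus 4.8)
The plan is to exploit the transitivity of the $G$-action on $\C$ together with the elementary identity $[1,z]\cdot 0 = z$, which says that $z\mapsto [1,z]$ is a global section of the orbit map $G\to\C$, $g\mapsto g\cdot 0$, whose fiber over $0$ is precisely the stabilizer $G_0=\set{[a,0]\ :\ a\in\U(1)}$. Both implications will follow once this section and its interaction with $\rho$ are understood; the statement being purely group-theoretic, no continuity or holomorphy of $\rho$ or $\tau$ should be invoked.

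For the necessity direction, suppose $\tau$ satisfies \eqref{EquivCond} and set $\beta:=\tau(0)$. Evaluating \eqref{EquivCond} at $g=[1,z]$ and the base point $0$ gives at once $\tau(z)=\tau([1,z]\cdot 0)=\rho([1,z])\cdot\tau(0)=\rho([1,z])\cdot\beta=\tau_\beta(z)$, so $\tau$ has the asserted form. To check $\beta\in\Xi_\rho$, I would restrict \eqref{EquivCond} to $g\in G_0$ and $z=0$: since $g\cdot 0=0$, one gets $\rho(g)\cdot\beta=\rho(g)\cdot\tau(0)=\tau(g\cdot 0)=\tau(0)=\beta$, hence $\rho(g)\in G_\beta$ for every $g\in G_0$, i.e. $\rho(G_0)\subseteq G_\beta$.

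For the sufficiency direction, fix $\beta\in\Xi_\rho$ and put $\tau:=\tau_\beta$. The key step is the ``section cocycle'' relation
\begin{equation*}
	g\,[1,z] = [1,\, g\cdot z]\,[a,0], \qquad g=[a,b]\in G,\ z\in\C,
\end{equation*}
which I would verify directly from the matrix products $[a,b][1,z]=[a,az+b]$ and $[1,az+b][a,0]=[a,az+b]$, noting $g\cdot z=az+b$ and $[a,0]\in G_0$. Applying the homomorphism $\rho$ and evaluating at $\beta$, the correction factor $\rho([a,0])$ lies in $\rho(G_0)\subseteq G_\beta$ and is therefore absorbed: $\rho(g)\rho([1,z])\cdot\beta=\rho([1,g\cdot z])\,\rho([a,0])\cdot\beta=\rho([1,g\cdot z])\cdot\beta$. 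Reading this as $\rho(g)\cdot\tau(z)=\tau(g\cdot z)$ gives \eqref{EquivCond}, and the parenthetical $\tau_\beta(0)=\beta$ is immediate since $[1,0]$ is the identity of $G$ and $\rho$ preserves it.

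All the computations are routine matrix algebra in $G$; the only point requiring care is isolating the correct $G_0$-valued term in the section relation and confirming that the defining property of $\Xi_\rho$ is exactly what makes that term vanish after applying $\rho$ — that is, that $\Xi_\rho$ is neither too large (so that $\tau_\beta$ still satisfies \eqref{EquivCond}) nor too small (so that every equivariant $\tau$ is captured, via $\beta=\tau(0)$). I expect this bookkeeping, rather than any analytic difficulty, to be the crux.
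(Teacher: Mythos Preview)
Your proof is correct and follows essentially the same approach as the paper's: both directions rest on the section $z\mapsto[1,z]$ of the orbit map and the factorization of a general lift of $z$ through $[1,z]$ and an element of $G_0$. The only cosmetic difference is that the paper first establishes the well-definedness statement $\tau_\beta(z)=\rho(g_z)\cdot\beta$ for \emph{any} $g_z$ with $g_z\cdot 0=z$ and then deduces equivariance from $(gg_z)\cdot 0=g\cdot z$, whereas you write down the explicit ``section cocycle'' $g[1,z]=[1,g\cdot z][a,0]$ directly; these are the same computation packaged slightly differently.
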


\begin{proof}
	For the proof of ``only if'', notice that Eq. \eqref{EquivCond} implies that $ \rho(G_x) \subset G_{\tau(x)} $ for every fixed $ x\in \C $. As a special case, we have
	\begin{equation}
		\rho(G_0) \subset G_{\tau(0)}.
	\end{equation}
	Next, using the fact that $ G $ acts transitively on $ \C $, one can exhibit for every $ z \in \C $ an element $ g_z \in G $ such that $ z= g_z.0 $. Hence,	one can check that $ \tau(z)= \rho(g_z). \tau(0) $, and therefore we have
	\begin{equation}\label{eq:tauConstruction}
		\tau(z) = \rho([1,z]).\tau(0).
	\end{equation}
	Conversely, it can be checked that \eqref{eq:tauConstruction} defines all mappings from $ \C $ onto $ \C $ that satisfy the equivariant condition \eqref{EquivCond} for a given $ G $-endomorphism $ \rho : \C \longrightarrow \C $.  
	To this end, fix $ \beta \in \Xi_{\rho} $ and note that $ \tau_{\beta} $, as defined by \eqref{eq:tauConstruction}, satisfies
	\begin{equation}\label{eq:taubeta}
	\tau_{\beta}(z) = \rho(g_z) \cdot \beta
	\end{equation}
	for every $ g_z\in G $, such that $ g_z\cdot0=z $. Indeed, since $ g_z=[a,z]=[1,z][a,0] $ for some arbitrary $ a\in \U(1) $, it follows
	$$  \rho(g_z)\cdot\beta=\rho([1,z][a,0])\cdot\beta=\rho([1,z])\cdot\left( \rho([a,0])\cdot\beta \right). $$ Thus, using the fact that $ \beta \in \Xi_{\rho} $, which reads here as $ \rho([a,0])\cdot \beta = \beta $ for every $ a \in \U(1) $, one then gets
	\[ \rho(g_z)\cdot\beta= \rho([1,z])\cdot\beta := \tau_{\beta}(z).  \]
	Now, since for every $ g\in G $ we have $ (gg_z)\cdot 0 = g.z $, we obtain
	\[ \tau(g\cdot z) \stackrel{\eqref{eq:taubeta}}{=} \rho(gg_z)\cdot \beta=\rho(g)\cdot \left(\rho(g_z)\cdot \beta \right) = \rho(g) \cdot \tau_{\beta}(z),  \]
	which means that the equivariant condition \eqref{EquivCond} holds. This completes the proof.
\end{proof}

\begin{remark}
	Note that when $ \rho(G_0) $ is trivial; i.e. $ \rho(G_0)=\set{1} $, we have $ \Xi_{\rho} = \C $, and therefore, for the particular case $ \rho = \operatorname{Id}_G $ ($ \operatorname{Id}_G $ being the identity map of $ G $), the mapping $ \tau $ such that the pair $ (\operatorname{Id}_G,\tau) $ satisfies the equivariant condition \eqref{EquivCond} are the translations $ \tau_{\beta}(z)=z + \beta $ with $  \beta \in \C $. Note also that such translations satisfy \eqref{EquivCond} in the case when the G-endomorphism $ \rho $ verifies $ \rho([1,z])=(a_z,z) $.
\end{remark}

\begin{example}\label{exmp:hammond}
	Let $ \mathfrak{h}=[\alpha_{\mathfrak{h}},\beta_{\mathfrak{h}}] $ be a fixed element in $ G $.
	Consider the $ G $-endomorphism $ \rho_{\mathfrak{h}} $ given by
	\begin{equation}\label{eq:hammondrho}
	\rho_{\mathfrak{h}}(g):= \frakh g\frakh^{-1}=[a,(1-a)\beta_{\frakh}+b\alpha_{\frakh}]; \qquad g=[a,b]\in G,
	\end{equation}
	and the map $  \tau_{\frakh} : \C \longrightarrow \C $ defined by
	\begin{equation}\label{eq:hammondTau}
	\tau_{\frakh}(z):= \frakh\cdot z = \alpha_{\mathfrak{h}} z + \beta_{\mathfrak{h}}.
	\end{equation}
	The pair $ (\rho_{\frakh}, \tau_{\frakh}) $ defines an ``alteration of $ (\operatorname{Id}_G, \operatorname{Id}_{\C}) $'' in the sense of \cite{hammond},
and satisfies the equivariant condition \eqref{EquivCond}. This is in correlation with what precede, since for such $ \rho_{\frakh} $, we have $ \rho_{\frakh}([a,b])\cdot\beta_{\frakh} =\beta_{\frakh} $, i.e. $ \beta_{\frakh}\in \Xi_{\rho} $. Therefore, one can use \eqref{eq:taubetaconstruction} to reproduce again $ \tau_{\frakh} $  given through \eqref{eq:hammondTau}.
\end{example}

\begin{example} \label{exampleConjugate}
	Consider the conjugate endomorphism under $ G $,
	\[ \rho([a,b]) = \overline{[a,b]} := [\bar{a},\bar{b}] .\]
	In this case, one can see easily that  $ \Xi_{\rho} = \set{0} $. According to Theorem \ref{thm:equivariantEquivalence}, $ ([a,b]\!\mapsto\!\overline{[a,b]},\tau) $ is an equivariant pair if and only if :
	\begin{equation}
	\tau(z):=[1,\overline{z}] \cdot 0 = \overline{z}.
	\end{equation} 
\end{example}

We conclude this section by considering the case  of the mapping $ \rho : G \longrightarrow G $  separating variables, in the sense that it takes the form $ \rho = \tilde{\rho} \ltimes \tilde{\tau} $, meaning $ \rho([a,b]) :=[\tilde{\rho}(a),\tilde{\tau}(b)] $, with $ \tilde{\rho} : \U(1)\longrightarrow \U(1) $ and $ \tilde{\tau}: \C \longrightarrow \C $.


\begin{theorem} \label{thm::rauTildTauTild}
	Keep notations as above and assume that the pair $ (\tilde{\rho} \ltimes \tilde{\tau},\tau) $ satisfies the equivariant condition \eqref{EquivCond}. Then, we have two possible cases :
	\begin{itemize}
		\item $ \tilde{\rho} $ is a $ \U(1) $-endomorphism, $\tilde{\tau} $ is additive such that $ \tilde{\tau}(ab+c) = \tilde{\rho}(a)\tilde{\tau}(b)+\tilde{\tau}(c) $, $ a \in \U(1) $, $ b,c \in \C $ and $ \tau(z)= \tilde{\tau}(z)$.
		\item $ \tilde{\rho} \equiv 1, \tilde{\tau} $ is additive such that $ \tilde{\tau}(ab+c) = \tilde{\tau}(b)+\tilde{\tau}(c) $, $ a \in \U(1) $, $ b,c \in \C $ and $ \tau(z)= \tilde{\tau}(z)+\beta $ for certain $ \beta \in \C $.
	\end{itemize}
\end{theorem}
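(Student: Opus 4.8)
The plan is to play off two facts against each other: the constraint that $\rho=\tilde\rho\ltimes\tilde\tau$ be an honest group endomorphism of $G$, and the parametrization of the admissible $\tau$'s coming from Theorem~\ref{thm:equivariantEquivalence}.

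\emph{Step 1 (unwinding that $\rho$ is an endomorphism).} Using the semidirect-product law $[a,b][a',b']=[aa',ab'+b]$, I compare $\rho([a,b][a',b'])=[\tilde\rho(aa'),\tilde\tau(ab'+b)]$ with $\rho([a,b])\rho([a',b'])=[\tilde\rho(a)\tilde\rho(a'),\tilde\rho(a)\tilde\tau(b')+\tilde\tau(b)]$ componentwise. The first slot forces $\tilde\rho$ to be a $\U(1)$-endomorphism; the second forces $\tilde\tau(ab'+b)=\tilde\rho(a)\tilde\tau(b')+\tilde\tau(b)$ for all $a\in\U(1)$ and $b,b'\in\C$. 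Putting $a=1$ (so $\tilde\rho(1)=1$) shows $\tilde\tau$ is additive, hence $\tilde\tau(0)=0$; putting $b=0$ gives the homogeneity relation $\tilde\tau(ab)=\tilde\rho(a)\tilde\tau(b)$. Conversely, additivity plus this homogeneity relation recovers the mixed identity, so these two properties are exactly what ``$\rho$ is an endomorphism'' amounts to. (No regularity of $\tilde\rho$ or $\tilde\tau$ is needed, which is why the statement is purely algebraic.)

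\emph{Step 2 (feeding this into Theorem~\ref{thm:equivariantEquivalence}).} Since $\rho([1,z])=[\tilde\rho(1),\tilde\tau(z)]=[1,\tilde\tau(z)]$, the formula \eqref{eq:taubetaconstruction} reads $\tau_\beta(z)=\rho([1,z])\cdot\beta=\tilde\tau(z)+\beta$; hence every $\tau$ obeying \eqref{EquivCond} has the form $\tau(z)=\tilde\tau(z)+\beta$ with $\beta\in\Xi_\rho$. To finish I must compute $\Xi_\rho$. Here $G_0=\{[a,0]:a\in\U(1)\}$, so $\rho(G_0)=\{[\tilde\rho(a),0]:a\in\U(1)\}$, whereas $G_\beta=\{[a,(1-a)\beta]:a\in\U(1)\}$; thus $\rho(G_0)\subset G_\beta$ is equivalent to $(1-\tilde\rho(a))\beta=0$ for every $a\in\U(1)$.

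\emph{Step 3 (the dichotomy).} If $\tilde\rho\not\equiv1$, choosing $a_0$ with $\tilde\rho(a_0)\neq1$ forces $\beta=0$, so $\Xi_\rho=\{0\}$ and $\tau=\tilde\tau$; combined with Step~1 this is the first alternative. If $\tilde\rho\equiv1$, then $\Xi_\rho=\C$, the homogeneity relation collapses to $\tilde\tau(ab)=\tilde\tau(b)$, and $\tau(z)=\tilde\tau(z)+\beta$ with $\beta$ free — the second alternative. I anticipate no real obstacle: the whole thing is bookkeeping with the group law together with the already proven Theorem~\ref{thm:equivariantEquivalence}. The only points to handle with a bit of care are the exact computation of $\Xi_\rho$ (equivalently of the stabilizers $G_0$ and $G_\beta$) and the harmless overlap of the two cases when $\tilde\rho\equiv1$ and $\beta=0$, which I would simply acknowledge.
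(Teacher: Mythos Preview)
Your proposal is correct and follows essentially the same approach as the paper: first extract from the endomorphism condition that $\tilde\rho$ is a $\U(1)$-endomorphism and $\tilde\tau$ satisfies the mixed identity (hence is additive with $\tilde\tau(ab)=\tilde\rho(a)\tilde\tau(b)$), then invoke Theorem~\ref{thm:equivariantEquivalence} to write $\tau_\beta(z)=\tilde\tau(z)+\beta$ and determine $\Xi_\rho$ by the computation $\rho([a,0])\cdot\beta=\tilde\rho(a)\beta$, yielding the dichotomy. Your explicit description of $G_\beta=\{[a,(1-a)\beta]:a\in\U(1)\}$ is a minor elaboration, but the argument is the same.
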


\begin{proof}
Notice first that it is clear that $ \rho $ is a $ G $-endomorphism if and only if $ \tilde{\rho} $ is a $ \U(1) $-endomorphism and $ \tilde{\tau} $ satisfies
\begin{equation}\label{eq:tauTildEquivariance}
\tilde{\tau}(ab+c)= \tilde{\rho}(a)\tilde{\tau}(b)+\tilde{\tau}(c)
\end{equation}
for every $ a \in \U(1) $ and $ b,c \in \C $. 
Therefore, $ \tilde{\tau} $ becomes additive and satisfies
\begin{equation}
\tilde{\tau}(ab)=\tilde{\rho}(a)\tilde{\tau}(b)
\end{equation}
since $  \tilde{\tau}(0)=0 $.
According to Theorem \ref{thm:equivariantEquivalence}, the mappings $ \tau:\C \longrightarrow \C $, satisfying \eqref{EquivCond} relatively to $ \rho = \tilde{\rho} \ltimes \tilde{\tau} $, are those given by
\begin{equation} \label{eq:tauIn2}
\tau_{\beta}(z) = \tilde{\tau}(z)+\beta
\end{equation}
for varying $ \beta \in \Xi_{\rho} $. Moreover, we can show that for all $ \rho = \tilde{\rho} \ltimes \tilde{\tau} $ we have $ 0 \in \Xi_{\rho} $. Then, from \eqref{eq:tauIn2} with $ \beta = 0 $,  we get that the compatible map is $ \tau_{0}(z) = \tilde{\tau}(z) $.
On the other side, it is easy to see that $ \beta \neq 0 $ belongs to $ \Xi_{\rho} $ if and only if $ \tilde{\rho} \equiv 1 $. Indeed,
\[ \rho([a,0]) \cdot \beta := [\tilde{\rho}(a),\tilde{\tau}(0)]\cdot \beta = [\tilde{\rho}(a),0]\cdot \beta = \tilde{\rho}(a)\beta. \]
In this case, $ \rho(G_0) $ is trivial and $ \Xi_{\rho}= \C $. Furthermore, \eqref{eq:tauTildEquivariance} reads simply as
\begin{equation}
\tilde{\tau}(ab+c) = \tilde{\tau}(b)+\tilde{\tau}(c),
\end{equation}
and clearly yields
\begin{equation}\label{eq:tauInRhoTauTild}
\tau_{\beta}(az+b)= \tau_{\beta}(z)+\tau_{\beta}(b)+\beta
\end{equation}
for all $ a\in \U(1) $ and $ z,b \in \C $. 
This ends the proof of Theorem \ref{thm::rauTildTauTild}.
\end{proof}

\begin{remark}
	In the context of Example \ref{exmp:hammond}, we have
	\begin{equation}
	\tau_{\frakh}(az+b) = \tilde{\rho}(a)(\tau_{\frakh}(z)-\tau_{\frakh}(0)) + \tau_{\frakh}(b),
	\end{equation}
	which is clearly similar to \eqref{eq:tauInRhoTauTild}.
\end{remark}

\begin{remark}
	The equivariant pair given in Example \ref{exampleConjugate} by $ ([a,b]\!\mapsto\!\overline{[a,b]},z\!\mapsto\!\overline{z}) $ fits the first point of  Theorem \ref{thm::rauTildTauTild}. In fact we have, for all  $a \in \U(1) $ and $ z\in \C $
	$$\begin{array}{lcr}
	\tilde{\rho}(a)=\bar{a}, \mbox{ and } \tau(z)= \tilde{\tau}(z)=\bar{z}.
	\end{array} $$
\end{remark}

\section{Mixed automorphic functions}

Let $\Gamma$ be a discrete subgroup of $G$, $ (\rho,\tau) $ be an equivariant pair such that $\rho(\Gamma)\subset \Gamma$, and $ \chi $ a map such that $ \chi : \Gamma \rightarrow \U(1) $. Let $ \nu $ and $  \mu $ be two reals such that $ \nu,\mu>0$. 
Associated with data $ (\Gamma;\chi, \nu, \mu, \rho,\tau)$, we perform $ \mixedSAutomorphic(\C) $, the vector space of smooth complex-valued functions $ F $ on $ \C $ satisfying the functional equation:
\begin{equation}\label{eq:AutoFunctional}
F(\gamma\cdot z)= J^{\nu,\mu}_{\rho,\tau}(\gamma,z) F(z) 
\end{equation}
for every $ \gamma \in \Gamma $ and $ z \in \C $, where
\begin{equation}
\label{eq:Mixedfact} J^{\nu,\mu}_{\rho,\tau}(\gamma,z) : = \chi(\gamma) j^\nu(\gamma, z) j^\mu(\rho(\gamma), \tau(z)) 
\end{equation}
and
\begin{equation*} 
j^\alpha(g, z) : = \e^{-\alpha i \Imaginaary\scal{z,\gamma^{-1}\cdot0}}.
\end{equation*}
	The equivariant condition \eqref{EquivCond} ensures that $j^\alpha(\rho(\cdot), \tau(\cdot))$  is also an automorphic factor	when $j^\alpha(\cdot, \cdot)$ is one too. Indeed, for any $  \gamma, \gamma' \in \Gamma $, we have
\begin{align*}
	j^\alpha (\rho(\gamma\gamma'), \tau(z)) &= j^\alpha (\rho(\gamma)\rho(\gamma'), \tau(z)) \\
	& =  j^\alpha (\rho(\gamma), \rho(\gamma')\cdot\tau(z)) j^\alpha (\rho(\gamma'),\tau(z)) \\
	& =  j^\alpha (\rho(\gamma), \tau(\gamma'\cdot z)) j^\alpha (\rho(\gamma'),\tau(z)),
\end{align*}
where in the second line we have  used  the co-cycle equation verified by $j^\alpha$  being an automorphic factor, and the third line follows by means of \eqref{EquivCond}.

\begin{definition}
	The space of $ \mixedSAutomorphic(\C) $ is called the space of planar mixed automorphic functions, or also  of bi weight $ (\nu,\mu) $, with respect to the equivalent pair $ (\rho,\tau) $ and the discrete subgroup $ \Gamma $.
\end{definition}

The space $ \mixedSAutomorphic(\C) $ can be realized as holomorphic sections of the line bundle $\Gamma\backslash (\C\times \C)$  over $\Gamma\backslash \C$ with fiber $\C$ by considering the action
$$\gamma \cdot (z,w) = \left( \gamma \cdot z, J^{\nu,\mu}_{\rho,\tau}(\gamma,z)  w\right)  .$$  
As in the classical setting,  the data $ (\Gamma;\chi, \nu, \mu, \rho,\tau)$ cannot be chosen freely or else the space will be trivial.
Namely, using  similar arguments as in \cite{ghanmi2011characterization,elgourari2011spectral,Elfardi2017a}, we can show the following.

\begin{proposition} \label{prop:nontrivialSecondKind}
	The functional space $ \mathcal{M}^{\nu,\mu}_{\rho,\tau}(\C) $ is nontrivial if and only if the condition
	\begin{equation}\label{eq:RDQtypeCondition}
	\chi(\gamma\gamma')=\chi(\gamma)\chi(\gamma) e^{-2i\phi^{\nu,\mu}_\rho(\gamma,\gamma')} 
	\end{equation}
	holds for every $\gamma,\gamma'\in \Gamma$, where the phase function $\phi^{\nu,\mu}_\rho(g,g')$ is the real-valued function defined on $G\times G$ by
	\begin{equation}\label{PhaseFactor}
	\phi^{\nu,\mu}_\rho(g,g'):= \Imaginaary  \Big(\nu \scal{g^{-1}\cdot
		0,g'\cdot 0} + \mu \scal{\rho({g^{-1}})\cdot
		0,\rho(g')\cdot 0} \Big).
	\end{equation} 
\end{proposition}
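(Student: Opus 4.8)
The plan is to mimic the classical argument (as in \cite{ghanmi2011characterization,elgourari2011spectral}): the functional equation \eqref{eq:AutoFunctional} can only be consistent if the automorphic factor $J^{\nu,\mu}_{\rho,\tau}$ satisfies the cocycle relation $J^{\nu,\mu}_{\rho,\tau}(\gamma\gamma',z) = J^{\nu,\mu}_{\rho,\tau}(\gamma,\gamma'\cdot z)\, J^{\nu,\mu}_{\rho,\tau}(\gamma',z)$ for all $\gamma,\gamma'\in\Gamma$ and $z\in\C$; and conversely, once this cocycle relation holds, a standard averaging / Poincar\'e-series type construction produces a nonzero element of $\mixedSAutomorphic(\C)$. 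So the first step is to show that \eqref{eq:RDQtypeCondition} is \emph{equivalent} to the cocycle identity for $J^{\nu,\mu}_{\rho,\tau}$. Using \eqref{eq:Mixedfact}, write $J^{\nu,\mu}_{\rho,\tau}(\gamma,z) = \chi(\gamma)\, j^\nu(\gamma,z)\, j^\mu(\rho(\gamma),\tau(z))$. The excerpt already records that $j^\nu(\cdot,\cdot)$ and $j^\mu(\rho(\cdot),\tau(\cdot))$ are each automorphic factors, i.e. each satisfies the cocycle relation on its own; hence the product $j^\nu(\gamma,z)\, j^\mu(\rho(\gamma),\tau(z))$ satisfies the cocycle relation \emph{exactly}. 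Therefore the cocycle identity for $J^{\nu,\mu}_{\rho,\tau}$ reduces to $\chi(\gamma\gamma') = \chi(\gamma)\chi(\gamma')$ — which is \emph{not} what is claimed. The resolution, and the technical heart of the matter, is that $j^\alpha(g,z) = \e^{-\alpha i \Imaginaary\scal{z,\gamma^{-1}\cdot 0}}$ is only a cocycle \emph{up to a unitary constant}: a direct computation of $j^\alpha(\gamma,\gamma'\cdot z)\,j^\alpha(\gamma',z)\,/\,j^\alpha(\gamma\gamma',z)$ collapses all the $z$-dependence and leaves precisely the constant $\e^{-\alpha i\Imaginaary\scal{\gamma^{-1}\cdot 0,\,\gamma'\cdot 0}}$ (and similarly for the $\mu$-part with $\rho$ inserted, using \eqref{EquivCond} to rewrite $\rho(\gamma')\cdot\tau(z)=\tau(\gamma'\cdot z)$). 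Collecting the $\nu$- and $\mu$-contributions gives exactly $\e^{-2i\phi^{\nu,\mu}_\rho(\gamma,\gamma')}$ with $\phi^{\nu,\mu}_\rho$ as in \eqref{PhaseFactor}. Thus the true cocycle identity for $J^{\nu,\mu}_{\rho,\tau}$ forces \eqref{eq:RDQtypeCondition}, and conversely \eqref{eq:RDQtypeCondition} makes $J^{\nu,\mu}_{\rho,\tau}$ into a genuine automorphic factor.

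Concretely, I would carry it out in this order. First, expand $\Imaginaary\scal{\gamma^{-1}\gamma'^{-1}\cdot 0,\, z}$ by writing $(\gamma\gamma')^{-1}\cdot 0 = \gamma'^{-1}\cdot(\gamma^{-1}\cdot 0)$ and using that each $g\in G$ acts by $z\mapsto az+b$, so $\Imaginaary\scal{\cdot,\cdot}$ transforms bilinearly up to the explicit correction terms; the $|a|=1$ condition ($a\in\U(1)$) is what keeps the quadratic form $\Imaginaary\scal{\cdot,\cdot}$ invariant under the rotation part. This is the routine but slightly tedious bookkeeping step. Second, do the identical computation with $\rho$ and $\tau$ in place of the identity, invoking \eqref{EquivCond} precisely once to convert $\rho(\gamma')\cdot\tau(z)$ into $\tau(\gamma'\cdot z)$; since $\rho(g)$ again acts by an element of $\U(1)\ltimes\C$, the same algebra applies verbatim. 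Third, multiply the two phase corrections and read off $2\phi^{\nu,\mu}_\rho(\gamma,\gamma')$. Fourth, conclude the ``only if'' direction: if $F\not\equiv 0$ satisfies \eqref{eq:AutoFunctional}, applying it along $\gamma,\gamma'$ and along $\gamma\gamma'$ and comparing at a point $z$ with $F(z)\neq 0$ forces $J^{\nu,\mu}_{\rho,\tau}$ to be a cocycle, hence \eqref{eq:RDQtypeCondition}. Fifth, for the ``if'' direction, assume \eqref{eq:RDQtypeCondition}; then $J^{\nu,\mu}_{\rho,\tau}$ is a bona fide $\U(1)$-valued automorphic factor, so one constructs a nonzero section — e.g. by taking the orbit sum of a suitable Gaussian bump, exactly as in the references, the absolute convergence being guaranteed by discreteness of $\Gamma$ and the $\U(1)$-valued (hence unimodular) nature of $J^{\nu,\mu}_{\rho,\tau}$ — or one simply cites \cite{ghanmi2011characterization,elgourari2011spectral,Elfardi2017a} for this standard nontriviality construction.

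I expect the only real obstacle to be the sign-and-shift bookkeeping in the phase computation: getting the factor of $2$ and the orientation of $\phi^{\nu,\mu}_\rho$ right, and making sure that the $z$-dependent terms genuinely cancel (they must, since the left side $\chi(\gamma\gamma')/(\chi(\gamma)\chi(\gamma'))$ is a constant). A useful internal check is the degenerate case $\mu=0$, $\rho=\operatorname{Id}$, $\tau=\operatorname{Id}$, where \eqref{eq:RDQtypeCondition} must reduce to the pseudo-character relation $\chi(\gamma\gamma')=\e^{i\nu\Imaginaary\scal{\gamma',\gamma}}\chi(\gamma)\chi(\gamma')$ recalled in the introduction; matching signs and conventions there pins down the computation. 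Everything else is formal.
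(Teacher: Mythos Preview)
Your proposal is correct and is precisely the approach the paper intends: the paper does not actually prove Proposition~\ref{prop:nontrivialSecondKind} but only states that it follows ``using similar arguments as in \cite{ghanmi2011characterization,elgourari2011spectral,Elfardi2017a}'', and your plan---reduce nontriviality to the cocycle identity for $J^{\nu,\mu}_{\rho,\tau}$, compute the phase defect of $j^\alpha$ and of $j^\mu(\rho(\cdot),\tau(\cdot))$ to obtain \eqref{eq:RDQtypeCondition}, then construct a nonzero section by averaging---is exactly that standard argument. Your self-correction (that the paper's displayed ``cocycle'' for $j^\alpha$ is really only a projective cocycle, the constant phase being what forces $\chi$ to be a pseudo-character rather than a genuine character) is the right reading, and your proposed sanity check against the $\mu=0$ case in the introduction is the appropriate way to pin down the sign and the factor of~$2$ in \eqref{eq:RDQtypeCondition}.
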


\section{An invariant Schr\"odinger operator }

Starting from the next section, we will be interested in the spectral analysis on the space of mixed automorphic functions. To this end, we deal with a specific invariant magnetic Schr\"odinger operator. Recall, for instance, that a magnetic Schr\"odinger operator on a complete oriented Riemannian manifold $(M,g)$ 
can defined by 
\begin{eqnarray}\label{eq:shrodinger1:Chap5}
H_{\theta} =(d+ \operatorname{ext} \ \theta)^{*}(d+\operatorname{ext} \ \theta),
\end{eqnarray}
where $ \theta $ is a given $ \mathcal{C}^{1} $ real differential $ 1 $-form on $ M $ (potential vector),
$d$ stands for the usual exterior derivative acting on the space of differential $p$-forms $ \Omega^p (M)$, $\operatorname{ext}\theta$ is the operator of exterior left multiplication by $\theta$, i.e., $(\operatorname{ext}\theta) \omega = \theta \wedge \omega,$ 
and $(d+ \mbox{ext}\theta)^{*}$ is the formal adjoint of  $d+ \mbox{ext}\theta$ with respect to the Hermitian product
\[ \scalforms{\alpha , \beta} = \int_{M} \alpha \wedge \star \beta \]
induced by the metric  $ g $ on $ \Omega^p (M)$, where $ \star $ denotes the Hodge star operator associated with the volume form. 
In our case, $ M$ is the complex plane $ \C $ equipped with its K\"ahler metric
$
g= ds^2=- ({i}/{2}) dz\otimes d \overbar z= dx\otimes dy
$
and the corresponding volume form is $\Vol=dx dy$.
Thus, associated with the parameters $\mu, \nu, \alpha>0$ and given equivariant pair $(\rho,\tau)$, we consider the  potential vector in \eqref{eq:theta2:Chap5} 
which explicitly reads
\begin{equation}\label{eq:theta2:Chap5}
\theta^{\nu,\mu}_\tau(z) = -\left( \frac{\overline{S^{\nu,\mu}_\tau(z)}}{2}dz - \frac{S^{\nu,\mu}_\tau(z)}{2} d\bar z\right) ,\end{equation}
where 
$$  S^{\nu,\mu}_\tau (z) := \nu z +\mu \Big(\tau \frac{\partial\bar
	\tau}{\partial  \bar z}- \bar \tau \frac{\partial \tau}{\partial
	\bar z} \Big).$$
Straightforward computation shows that the corresponding explicit expression of 
the Schr\"odinger operator $\mixedSDelta  := - H_{\theta^{\nu,\mu}_\tau(z)}$ in \eqref{eq:shrodinger1:Chap5}, for given 
$\theta^{\nu,\mu}_\tau$ in \ref{eq:theta2:Chap5},
is given by 
\begin{equation}
	\mixedSDelta  = 4 \frac{\partial^2 }{\partial z\partial \bar z}    + 2\Big(S^{\nu,\mu}_\tau
	\frac{\partial }{\partial z} -
	\overline{S^{\nu,\mu}_\tau} \frac{\partial }{\partial \bar z} \Big)
- | S^{\nu,\mu}_\tau |^2  + \mu (\tau\Delta \bar \tau -\bar \tau \Delta \tau)
\label{ExplicitAutoLap}
\end{equation}
and generalizes, somehow,  the one given by
\begin{equation}\label{LandauHamiltonian}
\Delta_{\nu}= -H_{\theta_\alpha(z)} =  4
\frac{\partial^2}{\partial z \partial \bar z} + 2\nu
\left( z\frac{\partial}{\partial z}-\bar z \frac{\partial}{\partial \bar
	z}\right)  - {\nu^2} |z|^2 = \Delta^{\nu,0}_\tau,
\end{equation}
 the potential vector $\theta_\nu (z):= {-}\frac{\nu}{2}(\bar{z}dz-zd\bar{z}),$
representing the standard Landau gauge associated with a constant magnetic field of intensity $ \nu$.
It should be mentioned here that, from physics standpoint,  the considered Schr\"odinger operators are the Hamiltonians governing the behavior of a charge in the complex plane under the influence of a  magnetic fields \cite{asch-magnetic}. 
When they are acting on classical automorphic functions, they can be viewed geometrically as the Bochner Laplacian defined on the smooth cross sections of a line bundle over $ \Gamma\backslash \C $.

 The choice of $\theta^{\nu,\mu}_\tau$ is made to ensure that the underling $\mixedSDelta$ be invariant with respect to the projective representation 
 \begin{equation}\label{eq:repreMixed}
 [\mathcal{T}^{\nu,\mu}_gf](z):= \overbar{J^{\nu,\mu}_{\rho,\tau}
 	(g,z)} f(g\cdot z) ,
 \end{equation}
  associated with the mixed automorphic factor given by
\eqref{eq:Mixedfact}. 
This manifests in the following result.

\begin{proposition}\label{prop:invariance}
	The operator $\mixedSDelta$  commutes with $\mathcal{T}^{\nu,\mu}_g$, that is we have 
	\begin{align*}
	\mathcal{T}^{\nu,\mu}_g\mixedSDelta  = \mixedSDelta \mathcal{T}^{\nu,\mu}_g; \, g\in G.
	\end{align*} 

\end{proposition}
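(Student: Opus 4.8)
The plan is to verify the intertwining relation $\mathcal{T}^{\nu,\mu}_g \mixedSDelta = \mixedSDelta \mathcal{T}^{\nu,\mu}_g$ by a direct computation, exploiting the factorization $\mixedSDelta = -H_{\theta^{\nu,\mu}_\tau} = -(d+\ext\,\theta^{\nu,\mu}_\tau)^*(d+\ext\,\theta^{\nu,\mu}_\tau)$ together with the fact that the potential vector $\theta^{\nu,\mu}_\tau$ was tailored precisely so that the magnetic exterior derivative behaves covariantly under the representation. Concretely, I would first reduce the claim to a statement about the first-order magnetic operator $\mathcal{D} := d + \ext\,\theta^{\nu,\mu}_\tau$: it suffices to show that conjugation by $\mathcal{T}^{\nu,\mu}_g$ turns $\mathcal{D}$ into the magnetic exterior derivative with respect to the pulled-back potential, i.e. that on functions one has $\mathcal{D}\, \mathcal{T}^{\nu,\mu}_g f = \mathcal{T}^{\nu,\mu}_g\big((d + \ext\, (g^{*}\theta^{\nu,\mu}_\tau))\,f\big)$ up to the automorphic phase, and that $\theta^{\nu,\mu}_\tau$ is invariant in the appropriate gauge sense. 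Since $H_\theta = \mathcal{D}^*\mathcal{D}$ and $\mathcal{T}^{\nu,\mu}_g$ is (up to a unimodular factor) an isometry for $\scalforms{\cdot,\cdot}$ because $|J^{\nu,\mu}_{\rho,\tau}(g,z)|=1$ and $g$ acts by a holomorphic isometry of the Kähler metric $ds^2$, the adjoint $\mathcal{D}^*$ transforms the same way, and the intertwining for $H_{\theta^{\nu,\mu}_\tau}$, hence for $\mixedSDelta$, follows.

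The key computational input is the \emph{gauge identity}
\[
d\big(\log \overbar{J^{\nu,\mu}_{\rho,\tau}(g,z)}\big) + g^{*}\theta^{\nu,\mu}_\tau - \theta^{\nu,\mu}_\tau = 0,
\]
which I would establish using the explicit form $\theta^{\nu,\mu}_\tau(z) = -\tfrac12(\overline{S^{\nu,\mu}_\tau(z)}\,dz - S^{\nu,\mu}_\tau(z)\,d\bar z)$ and the formula $J^{\nu,\mu}_{\rho,\tau}(g,z) = \chi(g) j^\nu(g,z) j^\mu(\rho(g),\tau(z))$ with $j^\alpha(g,z) = \e^{-\alpha i \Imaginaary\scal{z,g^{-1}\cdot 0}}$. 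The computation splits into the $\nu$-part and the $\mu$-part: the $\nu$-part is exactly the classical Landau-gauge invariance used in \cite{elgourari2011spectral}, while the $\mu$-part requires pulling $\theta$ back along $\tau$ and using the equivariance $\tau(g\cdot z) = \rho(g)\cdot\tau(z)$ from \eqref{EquivCond} to rewrite $\tau$-derivatives at $g\cdot z$ in terms of $\tau$-derivatives at $z$; the chain rule together with $\rho(g)$ being an affine unitary map of $\C$ is what makes the $\e^{i\Imaginaary(\cdot)}$ phases match. Once the gauge identity holds, a short manipulation shows that for any function $f$,
\[
[\mathcal{D}(\mathcal{T}^{\nu,\mu}_g f)](z) = \overbar{J^{\nu,\mu}_{\rho,\tau}(g,z)}\, [(\mathcal{D}f)(g\cdot z)]
\]
after identifying $1$-forms at $z$ with their pullbacks under the isometry $z\mapsto g\cdot z$, i.e. $\mathcal{T}^{\nu,\mu}_g$ genuinely intertwines $\mathcal{D}$ with itself; this is the engine of the whole proof.

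The main obstacle I anticipate is bookkeeping in the $\mu$-part of the gauge identity: $S^{\nu,\mu}_\tau$ involves the nonholomorphic combination $\tau\,\partial_{\bar z}\bar\tau - \bar\tau\,\partial_{\bar z}\tau$, so one must carefully track how $\partial_z$ and $\partial_{\bar z}$ transform under $z \mapsto g\cdot z = az+b$ (these are constant-coefficient changes since $a\in\U(1)$) \emph{and} how the nested map $\tau$ transforms, using $\tau(g\cdot z) = \rho(g)\cdot\tau(z) = \tilde a\,\tau(z) + \tilde b$ for the affine data $\rho(g) = [\tilde a,\tilde b]$. The potentially delicate point is that $\tau$ need not be holomorphic, so $\partial_{\bar z}(\tau\circ g)$ produces both $\partial_z\tau$ and $\partial_{\bar z}\tau$ terms at $g\cdot z$; however, because $\rho(g)$ is affine these reorganize cleanly, and the imaginary-part structure of the phase in $j^\mu$ absorbs exactly the mismatch. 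I would organize this by writing everything in terms of the single field $S^{\nu,\mu}_\tau$, proving the transformation rule $S^{\nu,\mu}_\tau(g\cdot z) = \overline{a}\,S^{\nu,\mu}_\tau(z) + (\text{constant depending on }g)$ as a lemma, and then reading off the gauge identity; alternatively one can avoid differential forms entirely and verify $\mathcal{T}^{\nu,\mu}_g \mixedSDelta = \mixedSDelta\mathcal{T}^{\nu,\mu}_g$ by brute force from the explicit second-order expression \eqref{ExplicitAutoLap}, which is longer but requires no gauge-theoretic reformulation. Either way the proof is a calculation; the conceptual content is entirely in the equivariance \eqref{EquivCond} and the unimodularity $|J^{\nu,\mu}_{\rho,\tau}|\equiv 1$.
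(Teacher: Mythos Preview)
Your proposal is correct and follows essentially the same route as the paper: establish the gauge identity relating $g^{*}\theta^{\nu,\mu}_\tau$ to $\theta^{\nu,\mu}_\tau$ plus the logarithmic differential of the automorphic factor (using the derivative identities for $\tau$ that follow from equivariance), deduce that $\mathcal{T}^{\nu,\mu}_g$ intertwines $d+\ext\,\theta^{\nu,\mu}_\tau$ with itself, and pass to the adjoint by unitarity. One small correction: the sign in your gauge identity is off --- the correct version (and the one the paper uses) is $g^{*}\theta^{\nu,\mu}_\tau = \theta^{\nu,\mu}_\tau + \overbar{d\log J^{\nu,\mu}_{\rho,\tau}(g,z)}$, as a quick check with the pure Landau potential $\theta_\nu$ and a translation $g\cdot z=z+b$ confirms.
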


\begin{proof}
The proof is similar to the one given in \cite[Theorem~3.4]{elgourari2011spectral}. Though, it involves some more computational difficulties. For instance, we find ourselves in need for the following handy formulas driven from the equivariant condition 
$$\tau(g\cdot z)=\rho(g)\cdot\tau(z)= \phi(g) \tau(z) +\psi(g)$$
for given $G$-endomorphism $\rho: G\to G=\U(1)\ltimes \C$; $\rho(g)=[\phi(g),\psi(g)]$.
Indeed, by differentiating the equivariant condition, we get the identities
	\begin{align}
		& \frac{\partial \tau}{\partial z}(g\cdot z) = \overbar{a}\phi{(g)} \frac{\partial \tau}{z}(\partial z) \label{eq:handyeq1}\\
		&\frac{\partial \overbar{\tau}}{\partial z}(\gamma\cdot z) = \overbar{a}\overbar{\phi{(g)}} \frac{\partial \overbar{\tau}}{\partial z}(z)
		\\
		&\frac{\partial \tau}{\partial \overbar{z}}(g\cdot z) = {a}\phi(g )\frac{\partial \tau}{\partial \overbar{z}}(z) \label{eq:handyeq2}\\ &\frac{\partial \overbar{\tau}}{\partial \overbar{z}}(g\cdot z) = {a}\overbar{\phi(g)} \frac{\partial \overbar{\tau}}{\partial z}(\partial \overbar{z}).\label{eq:handyeq4}
	\end{align} 
Therefore, the transform $ \mathcal{T}^{\nu,\mu}_g $ can be naturally extended to the space of differential forms by considering
\begin{equation}
\mathcal{T}^{\nu,\mu}_g \omega = \overbar{\JiMuNu(g,z)}g^*\omega,
\end{equation}
where $ g^* $; $= \U(1)\ltimes \C$, denotes the pull-back mapping on the space of differential form of $ \C $.
	
	Now, by component-wise straightforward computations, making use of the equations \eqref{eq:handyeq1}--\eqref{eq:handyeq4}, we show that the potential vector $ \theta^{\nu,\mu}_\tau $ in \eqref{eq:theta2:Chap5} satisfies 
	\begin{equation}\label{eq:invariantp}
	g^* \theta_{\nu,\mu}  =\theta_{\nu,\mu} +  \overbar{d  \log (J^{\nu,\mu}(g,z))} .
	\end{equation}
Subsequently, we have
	\begin{align*}
	\mathcal{T}^{\nu,\mu}_g\big((d+ \operatorname{ext} \theta_{\nu,\mu}) f\big)
	&= \overbar{J^{\nu,\mu}(g,z)}  \Big[g^{*}\Big((d+ \operatorname{ext}\theta_{\nu,\mu})f\Big)\Big]\\
	&= \overbar{J^{\nu,\mu}(g,z)}  \Big(d[g^{*}f]+
	[g^{*}\theta_{\nu,\mu} ]\wedge [g^{*}f]\Big)\\
	&\stackrel{\eqref{eq:invariantp}}{=} \overbar{J^{\nu,\mu}(g,z)} d [g^{*}f]+
	\overbar{J^{\nu,\mu}(g,z)}\theta_{\nu,\mu} [g^{*}f] +  d(\overbar{J^{\nu,\mu}(g,z)}) [g^{*}f] \\
	&= d(\overbar{J^{\nu,\mu}(g,z)} [g^{*}f])
	+ \theta_{\nu,\mu} \overbar{J^{\nu,\mu}(g,z)}[g^{*}f]\\
	&= (d+ \operatorname{ext} \theta_{\nu,\mu}) \big(\mathcal{T}^{\nu,\mu}_g f\big).
	\end{align*}
	For these algebraic computations we have made use of the facts $g^{*}d=dg^{*}$ as well as $g^{*}(\alpha\wedge\beta)= g^{*}\alpha\wedge g^{*}\beta,$ together with the identity \eqref{eq:invariantp}. 
	This completes the proof, since $\mathcal{T}^{\nu,\mu}_g$ commutes also with
	$(d+ \operatorname{ext} \theta_{\nu,\mu})^{*}$ for $\mathcal{T}^{\nu,\mu}_g$ being a unitary transformation. 
\end{proof}

The invariance property shows in particular that the eigenvalue problem for  $ \mixedSDelta $ on the space of MAFs $ \mixedSAutomorphic(\C) $ is well defined. Moreover, we may prove that the eigenvalue problem of $ \mixedSDelta $ on $ \mixedSAutomorphic(\C) $ is equivalent to the eigenvalue problem of $ \Delta_{\alpha} $ on the space of  classical automorphic functions $\mathcal{F}^{\alpha}_{\Gamma,\chi}(\C)$ for for some special $ \alpha $ that depends on the parameters $ \nu,\mu $ and the equivariant pairs $ (\rho,\tau) $.
The proof of this assertion is contained Key Lemma \ref{keylem1:eigenproblem} and Theorem \ref{Thm2} below. 

\begin{keylemma}\label{keylem1:eigenproblem}
	Keep notations as above. Then, the following assertions hold trues
	\begin{enumerate} 
\item[i)]	The quantity
	\begin{equation}\label{NewMagnfield}
	\B (z):= \nu +\mu\left(\left|\frac{\partial \tau}{\partial z}({z})\right|^2- \left|\frac{\partial \tau}{\partial\bar z}({z})\right|^2\right)
	\end{equation}
	 is a real constant on $\C$.

\item[ii)]	Set $ \alpha =\alpha_\tau^{\nu,\mu}= \B $. Then, 
	the operators $ \Delta_{\alpha} $ and $ \mixedSDelta $ are intertwining, i.e.,
	\begin{equation} \label{interwin}
		\mixedSDelta=e^{-i\varphi_{\tau}^{\nu,\mu}(z)} \Delta_{\B\!\!\!} \ e^{i\varphi_{\tau}^{\nu,\mu}(z)}
	\end{equation}
	for some smooth mapping $ \varphim $.

\item[iii)]	There exists a smooth real mapping $ \varphim $ such that \eqref{interwin} is fulfilled.  
\end{enumerate}
\end{keylemma}

\begin{proof}	
Using \eqref{eq:handyeq1} and \eqref{eq:handyeq2}, 
as well as the fact that $$ \phi(g) \frac{\partial (g\cdot z)}{\partial z}  \in \U(1) ,$$
 it follows that $ \B $ is invariant under the action of $ G $, and therefore is constant on $ \C $.

	To prove \eqref{interwin}, it is enough to show that the exterior derivatives of their potential vectors are equal. Indeed, it can be checked that $ d\theta^{\nu,\mu}_\tau(z) = d\theta_{\B} (z) $. Therefore, there exists a smooth mapping $ \varphim $ such that
	\begin{equation}
	\theta^{\nu,\mu}_\tau(z) = \theta_{\B} + id\varphim.
	\end{equation}
The map	$ \varphim $ can be freely chosen up to a translation by constants. By means of the last equation we can write
	\begin{equation}
	(d+\operatorname{ext} \ \theta^{\nu,\mu}_\tau) = e^{-i\varphim} (d+ \operatorname{ext} \ \theta_{\B}) e^{i\varphim}.
	\end{equation}
	Accordingly, its adjoint $ (d+ \operatorname{ext}\theta^{\nu,\mu}_\tau)^* $ holds similar equality. Subsequently,
	\begin{equation}
	\mixedSDelta=e^{-i\varphi_{\tau}^{\nu,\mu}(z)} \Delta_{\B\!\!\!} \ e^{i\varphi_{\tau}^{\nu,\mu}(z)}.
	\end{equation}

The last assertion follows easily by making the observation that 
$$2i d \Im(\varphim) = d(\varphim + \overline{\varphim})= 0,$$ since 
$\overline{\theta^{\nu,\mu}_\tau} = - \theta^{\nu,\mu}_\tau$ and $ \overline{\theta_{\B} } =   - \theta_{\B} $. 
Therefore, one can check that $ \varphim $ has a constant purely imaginary part, and then can be translated such that it is real.
\end{proof}

\begin{remark}
	The intertwining property for the Schr\"odinger operators $ \Delta_{\B}  $ and $ \mixedSDelta $ can be argued using physical interpretation (see for example \cite{asch-magnetic}). 
	Indeed, by \eqref{NewMagnfield}, such operators are observables for the quantum behavior of a charged particle under the influence of such constant magnetic field,
	which means that they are related by a gouge transformation. 
	Furthermore, the two operators would rather be unitary equivalent if we had considered a different convention for the Schr\"odinger operator. Namely, using the connection $ (d\pm i\theta) $ instead of $ (d\pm \theta) $, which is common in physic literature.
\end{remark}

\section{Lifting theorem for $ \mixedSAutomorphic(\C) $}

The function $ \varphim $ in $iii)$ of Key Lemma \ref{keylem1:eigenproblem} turns out to be the key to establish, as well, a bijection between  classical automorphic functions and mixed automorphic functions. Indeed, let us define $ \mathcal{W}_{\tau}^{\mu,\nu} $ to be the transformation given by
\begin{equation}\label{TransW}
\left[ \mathcal{W}_{\tau}^{\mu,\nu}f\right] (z) :=  e^{i\varphim(z)} f(z).
\end{equation}

\begin{theorem}\label{Thm2}
The range of the space of mixed automorphic functions 
	$ \mathcal{M}^{\nu,\mu}_{\rho,\tau}(\C) $ by  the transform	$ \mathcal{W}_{\tau}^{\mu,\nu} $ is exactly  the space of classical $(\Gamma,\chi_\tau)$-automorphic functions 
	\begin{equation*}
	\mathcal{F}^{\B}_{\Gamma,\chi_{\tau}}(\C):= \set{F\in\mathcal{C}^\infty(\C), \quad F(\gamma\cdot z)= \chi_\tau(\gamma)j^{\B}(\gamma, z)  F(z)}
	\end{equation*}
	associated to the specific pseudo-character defined on $\Gamma$ by
	\begin{equation}\label{eq:chiTau0}
\chi_\tau(\gamma)=\chi(\gamma)\exp\left( {\varphim(\gamma\cdot0)-2i\mu  \Imaginaary \scal{\tau(0),\rho(\gamma)^{-1}\cdot 0}}\right) .
\end{equation}
\end{theorem}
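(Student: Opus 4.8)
The plan is to verify directly that if $F \in \mathcal{M}^{\nu,\mu}_{\rho,\tau}(\C)$ then $\mathcal{W}_{\tau}^{\mu,\nu}F$ satisfies the transformation law defining $\mathcal{F}^{\B}_{\Gamma,\chi_{\tau}}(\C)$, and conversely; since $\mathcal{W}_{\tau}^{\mu,\nu}$ is pointwise multiplication by the nowhere-vanishing smooth function $e^{i\varphim}$, it is a bijection of $\mathcal{C}^\infty(\C)$ onto itself, so the two tasks together identify the image exactly. The starting point is the computation
\[
[\mathcal{W}_{\tau}^{\mu,\nu}F](\gamma\cdot z) = e^{i\varphim(\gamma\cdot z)} F(\gamma\cdot z) = e^{i\varphim(\gamma\cdot z)} J^{\nu,\mu}_{\rho,\tau}(\gamma,z) F(z) = e^{i(\varphim(\gamma\cdot z)-\varphim(z))} J^{\nu,\mu}_{\rho,\tau}(\gamma,z) [\mathcal{W}_{\tau}^{\mu,\nu}F](z),
\]
so everything reduces to showing that the cocycle $e^{i(\varphim(\gamma\cdot z)-\varphim(z))} J^{\nu,\mu}_{\rho,\tau}(\gamma,z)$ equals $\chi_\tau(\gamma) j^{\B}(\gamma,z)$ with $\chi_\tau$ as in \eqref{eq:chiTau0}.

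The key step is to control the difference $\varphim(\gamma\cdot z)-\varphim(z)$. From Key Lemma \ref{keylem1:eigenproblem}, $\theta^{\nu,\mu}_\tau = \theta_{\B} + i\,d\varphim$, so $id\varphim = \theta^{\nu,\mu}_\tau - \theta_{\B}$ is an explicit $1$-form; one integrates it to get $\varphim$ up to an additive constant, but in fact one does not need a closed formula. Instead I would use the invariance identity \eqref{eq:invariantp}, $g^*\theta_{\nu,\mu} = \theta_{\nu,\mu} + \overline{d\log J^{\nu,\mu}(g,z)}$, applied with $g=\gamma$, together with the analogous classical identity for $\theta_{\B}$ (the Landau gauge is invariant under $\mathcal{T}^{\B}_g$, giving $\gamma^*\theta_{\B} = \theta_{\B} + \overline{d\log j^{\B}(\gamma,z)}$). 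Subtracting, $\gamma^*(i\,d\varphim) = i\,d\varphim + \overline{d\log J^{\nu,\mu}(\gamma,z)} - \overline{d\log j^{\B}(\gamma,z)}$, i.e.
\[
i\,d\big(\varphim(\gamma\cdot z)-\varphim(z)\big) = \overline{d\log\Big(\tfrac{J^{\nu,\mu}(\gamma,z)}{j^{\B}(\gamma,z)}\Big)} = -\, d\log\Big(\tfrac{J^{\nu,\mu}(\gamma,z)}{j^{\B}(\gamma,z)}\Big),
\]
where the last equality uses that $J^{\nu,\mu}/j^{\B}$ has unit modulus (both factors do), so its logarithm is purely imaginary and conjugation flips the sign. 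Hence $i(\varphim(\gamma\cdot z)-\varphim(z)) + \log(J^{\nu,\mu}(\gamma,z)/j^{\B}(\gamma,z))$ is \emph{constant} in $z$, which gives $e^{i(\varphim(\gamma\cdot z)-\varphim(z))} J^{\nu,\mu}_{\rho,\tau}(\gamma,z) = c(\gamma)\, j^{\B}(\gamma,z)$ for some $c(\gamma)\in\U(1)$ depending only on $\gamma$.

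It remains to evaluate the constant $c(\gamma)$, and this I would do by specializing to a convenient point. Taking $z=0$ gives $c(\gamma) = e^{i(\varphim(\gamma\cdot 0)-\varphim(0))} J^{\nu,\mu}_{\rho,\tau}(\gamma,0)/j^{\B}(\gamma,0)$; normalizing $\varphim(0)=0$ (permissible since $\varphim$ is defined up to a real constant, as noted in Key Lemma \ref{keylem1:eigenproblem}) and unwinding the definitions of $J^{\nu,\mu}_{\rho,\tau}$ in \eqref{eq:Mixedfact}, of $j^\alpha$, and of $\tau(0)=\beta$, the factors $\chi(\gamma)$, $j^\nu(\gamma,0)$, and $j^\mu(\rho(\gamma),\tau(0))$ combine with $j^{\B}(\gamma,0)^{-1}$ to produce precisely $\chi(\gamma)\exp\big(\varphim(\gamma\cdot 0) - 2i\mu\,\Imaginaary\scal{\tau(0),\rho(\gamma)^{-1}\cdot 0}\big) = \chi_\tau(\gamma)$. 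I expect the main obstacle to be bookkeeping rather than anything deep: correctly matching the phase $\Imaginaary\scal{z,\gamma^{-1}\cdot 0}$ conventions between $j^\nu$, $j^\mu(\rho(\cdot),\tau(\cdot))$ and $j^{\B}$ at $z=0$, and confirming that the residual $\mu$-dependent term is exactly $-2i\mu\,\Imaginaary\scal{\tau(0),\rho(\gamma)^{-1}\cdot 0}$ — here one uses $\rho(\gamma)^{-1}\cdot\tau(0)$ versus $\tau(\gamma^{-1}\cdot 0)$ via the equivariance \eqref{EquivCond}. One should also check, for completeness, that $\chi_\tau$ so defined actually satisfies the classical cocycle relation $\chi_\tau(\gamma\gamma') = \chi_\tau(\gamma)\chi_\tau(\gamma')e^{-2i\phi^{\B}(\gamma,\gamma')}$, which is automatic from Proposition \ref{prop:nontrivialSecondKind} applied on both sides, but it is worth stating so that $\mathcal{F}^{\B}_{\Gamma,\chi_{\tau}}(\C)$ is itself nontrivial whenever $\mathcal{M}^{\nu,\mu}_{\rho,\tau}(\C)$ is.
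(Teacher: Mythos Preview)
Your proposal is correct and takes a somewhat different, more conceptual route than the paper. Both arguments begin identically, reducing the theorem to showing that the ratio
\[
\widehat{\chi_\tau}(z;\gamma) \;=\; e^{i(\varphim(\gamma\cdot z)-\varphim(z))}\,\frac{J^{\nu,\mu}_{\rho,\tau}(\gamma,z)}{j^{\B}(\gamma,z)}
\]
is independent of $z$, and then evaluating it at $z=0$ to identify $\chi_\tau(\gamma)$. The difference lies in how the constancy is established. The paper (Key Lemma~\ref{LemTransMixed}) does it by brute-force differentiation: it extracts an explicit formula \eqref{eq:varphimz} for $\partial_z\varphim$ from the relation $i\,d\varphim=\theta^{\nu,\mu}_\tau-\theta_{\B}$, expands $\partial_z\big(\varphim(\gamma\cdot z)\big)$ using the identities \eqref{eq:handyeq1}--\eqref{eq:handyeq4}, and checks by hand that the result cancels against the $z$-derivatives of the $j$-factors. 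You instead subtract the gauge-transformation identity \eqref{eq:invariantp} for $\theta^{\nu,\mu}_\tau$ from its classical analogue for $\theta_{\B}$, invoke $\theta^{\nu,\mu}_\tau-\theta_{\B}=i\,d\varphim$, and use that $J^{\nu,\mu}_{\rho,\tau}/j^{\B}$ is unimodular to conclude directly that $d\log\widehat{\chi_\tau}=0$. Your argument is cleaner---no coordinate manipulations with $\phi(\gamma),\psi(\gamma)$---and makes transparent that the constancy is really a formal consequence of the two invariance statements already obtained in Proposition~\ref{prop:invariance} and Key Lemma~\ref{keylem1:eigenproblem}; the paper's hands-on approach is more laborious but self-contained and produces the explicit derivative \eqref{eq:varphimz} as a by-product. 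The final evaluation at $z=0$ and the remark on the pseudo-character property of $\chi_\tau$ are handled the same way in both.
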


For the proof,  we begin with the following assertion concerning the function $\widehat{\chi_\tau}$ defined on $\C \times \Gamma$ by
\begin{equation}\label{eq:chiTau}
\widehat{\chi_\tau}(z;\gamma):=  e^{(\varphim(\gamma\cdot z)- \varphim(z))} \chi(\gamma) j^{\nu-\B}(\gamma,z) j^{\mu}(\rho(\gamma),\tau(z)).
\end{equation} 
 Here
$ j^{\alpha}= e^{-i\alpha  \Imaginaary \scal{z,g^{-1}\cdot 0}} $. 

\begin{keylemma}\label{LemTransMixed}
	The function $\widehat{\chi_\tau}$ 	is independent of the variable $z$ and we have 
		\begin{equation}\label{eq:chiTauchi}
	 \widehat{\chi_\tau}(z;\gamma)= \chi_\tau(\gamma). 
	\end{equation}
\end{keylemma}

\begin{proof} Differentiation of $\widehat{\chi_\tau}(z;\gamma)$ with respect to the $z$-variable gives
	\begin{align} \label{Indep1}
	 \frac{\partial \log \widehat{\chi_\tau}}{\partial z}   & =  S_1 + S_2,
	\end{align}
	where $S_1$ and $S_2$ stand for
		\begin{align*} 
		S_1 : =   \frac{\partial \varphim}{\partial z}(\gamma\cdot z)- \frac{\partial\varphim}{\partial z}(z)   
		\end{align*}
and 	
		\begin{align*}   S_2:=  [\nu-\B] \frac{\partial i \Imaginaary\scal{z,\gamma^{-1}\cdot 0}}{\partial z}  + \mu   \frac{\partial i \Imaginaary\scal{\tau(z),\rho(\gamma)^{-1}\cdot 0}}{\partial z}
		.\end{align*}
Thus, it is easy to check that
	\begin{equation}\label{IdI}
	S_2 = \frac{1}{2}\left[ (\B-\nu)a\overbar{b}-\mu \Big( \phi(\gamma)\overbar{\psi(\gamma)}\frac{\partial \tau}{\partial z}{} - \overbar{\phi(\gamma)}{\psi(\gamma)}\frac{\partial \overbar \tau}{\partial z}{} \Big) \right].
	\end{equation}
For the explicit computation, we have used (here and elsewhere) the notation $ \gamma:=[a,b] $ and $\rho(\gamma)=[\phi(\gamma),\psi(\gamma)]$.
For the evaluation of the component $S_1$, we make use of  
	\begin{align}
	\frac{\partial \varphim}{\partial z}(z) & = \frac{1}{2}\left[  \B\overbar{z} -  S^{\nu,\mu}_\tau (z) \right] = \overline{\left( \frac{\partial \varphim}{\partial \overbar{z}}(z)\right) } \label{eq:varphimz} 
\end{align}
which follows by  direct computation, to get
	\begin{align*}
	&\frac{\partial \varphim(\gamma\cdot z)}{\partial z} =\frac{a}{2}  \left[ (\B-\nu) \overbar{(\gamma\cdot z)}-\mu \Big(\overbar{\tau(\gamma\cdot z)} \frac{\partial
		\tau}{\partial z}(\gamma\cdot z)-  \tau(\gamma\cdot z) \frac{\partial \overbar{\tau}}{\partial
		z}(\gamma\cdot z) \Big) \right].
	\end{align*}
Now, direct computations, mainly by making use of the equations \eqref{eq:handyeq1}--\eqref{eq:handyeq4}, infers
\begin{align*}
\frac{\partial \varphim(\gamma\cdot z)}{\partial z}	
	&=\frac{\B-\nu}{2} \left( \overbar{z}+a\overbar{b}\right) \\
	& \quad -  \frac{\mu}{2} \left( \overbar{\tau{}} \frac{\partial \tau}{\partial z} + \phi(\gamma)\overbar{\psi(\gamma)}\frac{\partial \tau}{\partial z} -
	( {\tau( z)} \frac{\partial \overbar \tau}{\partial z}{} + \overbar{\phi(\gamma)}{\psi(\gamma)}\frac{\partial \overbar \tau}{\partial z}{} ) \right),
	\end{align*}
which reduces further to 
\begin{align*}
\frac{\partial \varphim(\gamma\cdot z)}{\partial z}	
	&= \frac{\partial \varphim}{\partial z}(z) +
	\frac{1}{2}\left[ (\B-\nu)a\overbar{b}-\mu \Big( \phi(\gamma)\overbar{\psi(\gamma)}\frac{\partial \tau}{\partial z}{} - \overbar{\phi(\gamma)}{\psi(\gamma)}\frac{\partial \overbar \tau}{\partial z} (z) \Big) \right]\\
	&= \frac{\partial \varphim}{\partial z}(z) +   S_2,
	\end{align*}
	thanks to \eqref{IdI}. This shows that  	$S_1 = S_2 $ and therefore
$	{\partial \widehat{\chi_\tau}}/{\partial z} = 0.$ 
	Moreover, using the facts that 
	$ \overbar{j^\alpha(\gamma,z)} = j^{-\alpha}(\gamma,z) $ and $$ \frac{\partial \varphim(\gamma\cdot z)}{\partial \overbar{z}} = \overbar{\frac{\partial \overbar{\varphim(\gamma\cdot z)}}{\partial z}} = \overbar{\frac{\partial \varphim(\gamma\cdot z)}{\partial {z}}} ,$$ it  follows that
$ {\partial \widehat{\chi_\tau}}/{\partial \overbar{z}} = 0.$ 
	This proves that $\widehat{\chi_\tau}$ is independent of the variable $z$ and therefore we have 
	\begin{equation}\label{eq:chiTauchi}
	\widehat{\chi_\tau}(z;\gamma)= \widehat{\chi_\tau}(z;0) = \chi_\tau(\gamma). 
	\end{equation}
	This ends the proof.
\end{proof}

\begin{proof}[Proof of Theorem \ref{Thm2}]
	We shall prove that $\mathcal{W}^{\nu,\mu}_\tau F$ belongs to $\mathcal{F}^{B^{\nu,\mu}_{\tau}}_{\Gamma,\chi_{\tau}}(\C)$
	whenever  $F \in \mathcal{M}^{\nu,\mu}_{\tau}(\C)$, where $\chi_\tau(\gamma)$ is the one defined through \eqref{eq:chiTau0}. Indeed, for fixed 
	 $ \gamma \in \Gamma $, we have
	\begin{align*}
	[\mathcal{W}^{\nu,\mu}_\tau F](\gamma\cdot z)
	& := e^{i\varphim(\gamma\cdot z)} F(\gamma\cdot z) \\
	&  = e^{i\varphim(\gamma\cdot z)} \chi(\gamma) j^{\nu}(\gamma, z)j^{\mu}(\rho(\gamma), \tau(z)) F(z) \\
	&  = e^{i(\varphim(\gamma\cdot z)-\varphim(z))} \chi(\gamma) j^{\nu}(\gamma, z)j^{\mu}(\rho(\gamma), \tau(z)) [\mathcal{W}^{\nu,\mu}_\tau F](z) \\
	&  = \widehat{\chi_\tau}(z;\gamma)  j^{\B}(\gamma, z) [\mathcal{W}^{\nu,\mu}_\tau F](z),
	\end{align*}
	where $ \widehat{\chi_\tau}(z;\gamma) $ is defined by \eqref{eq:chiTau}.
	This completes the proof thanks to  the Key Lemma  \ref{LemTransMixed}, $\widehat{\chi_\tau}(z;\gamma)= \chi_\tau(\gamma)$. 
\end{proof}

\begin{remark} \label{CorRDQ}
	For $ \mixedSAutomorphic $ being nontrivial, which is equivalent to $\chi$ be a pseudo-character (Proposition~\ref{prop:nontrivialSecondKind}), the data $(\Gamma;\B,\chi_\tau)$ satisfies the  Riemann-Dirac quantization type condition provided in \cite{ghanmiintissar} ensuring that the corresponding space of classical automorphic functions is nontrivial. Here
	$\B$ and $\chi_\tau$ are those defined by \eqref{eq:chiTau0} and \eqref{NewMagnfield}, respectively. 
\end{remark}

\section{Applications: Spectral analysis of $ L^2$-mixed automorphic functions} 

In the sequel, we provide immediate application of the lifting Theorem \ref{Thm2}. Namely, we are concerned with the concrete description of the spectral properties of $\mixedSDelta$ acting on  mixed automorphic functions belonging to the Hilbert space $ L^{2,\nu}_{\mu,\tau}(\C/\Gamma)$ of square integrable functions with respect to  the scalar product 
$$ \scal{f,g}_{\nu,\mu,\tau} := \int_{\C/\Gamma} f(z) \overline{g(z)} e^{-\nu|z|^2 - \mu (|E\tau(z)|^2  - |\overline{E}\tau(z)|^2 )} d\lambda(z)$$
where $E =z \frac{\partial}{\partial z}$ is the complex Euler operator. 
To this end, we begin by determining the range of the eigenspace  of all eigenfunctions of $\mixedSDelta$ in $\mathcal{M}^{\nu,\mu}_{\tau}(\C)$ corresponding to the eigenvalue $\lambda$, 
 \begin{equation}\label{M-Eigenspace}
	\mathcal{E}^{\nu,\mu}_{\tau;\lambda}:= \set{F\in
		\mathcal{M}^{\nu,\mu}_{\tau}(\C); \quad
		\mixedSDelta F = \lambda F}.
\end{equation}
 
\begin{proposition} \label{propMF} 
	We have
	$$ \mathcal{W}^{\nu,\mu}_\tau (\mathcal{E}^{\nu,\mu}_{\tau;\lambda}) = \set{F\in
		\mathcal{F}^{\B}_{\Gamma,\chi_\tau}(\C); \quad
		\Delta_{\B\!\!\!} F = \lambda  F}=: \mathcal{E}^{\B}_{\lambda}. $$
\end{proposition}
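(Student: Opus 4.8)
The plan is to combine the intertwining relation of Key Lemma~\ref{keylem1:eigenproblem}~ii) with the lifting isomorphism of Theorem~\ref{Thm2}. Recall that $\mathcal{W}^{\nu,\mu}_\tau f = e^{i\varphim}f$ and that $\mixedSDelta = e^{-i\varphim}\Delta_{\B}e^{i\varphim}$, which is exactly the statement that $\mathcal{W}^{\nu,\mu}_\tau$ conjugates $\mixedSDelta$ into $\Delta_{\B}$, i.e. $\Delta_{\B}\circ\mathcal{W}^{\nu,\mu}_\tau = \mathcal{W}^{\nu,\mu}_\tau\circ\mixedSDelta$. First I would take $F\in\mathcal{E}^{\nu,\mu}_{\tau;\lambda}$, so that $F\in\mathcal{M}^{\nu,\mu}_{\tau}(\C)$ and $\mixedSDelta F = \lambda F$. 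By Theorem~\ref{Thm2}, $\mathcal{W}^{\nu,\mu}_\tau F \in \mathcal{F}^{\B}_{\Gamma,\chi_\tau}(\C)$, and applying the intertwining identity,
\begin{equation*}
\Delta_{\B}(\mathcal{W}^{\nu,\mu}_\tau F) = \mathcal{W}^{\nu,\mu}_\tau(\mixedSDelta F) = \mathcal{W}^{\nu,\mu}_\tau(\lambda F) = \lambda\,\mathcal{W}^{\nu,\mu}_\tau F,
\end{equation*}
so $\mathcal{W}^{\nu,\mu}_\tau F \in \mathcal{E}^{\B}_\lambda$. This gives the inclusion $\mathcal{W}^{\nu,\mu}_\tau(\mathcal{E}^{\nu,\mu}_{\tau;\lambda})\subset\mathcal{E}^{\B}_\lambda$.

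For the reverse inclusion I would use that $\mathcal{W}^{\nu,\mu}_\tau$ is invertible with inverse $g\mapsto e^{-i\varphim}g$, since $\varphim$ is a fixed smooth (real) function. One checks directly that this inverse maps $\mathcal{F}^{\B}_{\Gamma,\chi_\tau}(\C)$ back into $\mathcal{M}^{\nu,\mu}_{\tau}(\C)$ — this is just Theorem~\ref{Thm2} read backwards, reversing the chain of functional equations computed there (or, equivalently, observing that $\widehat{\chi_\tau}(z;\gamma)=\chi_\tau(\gamma)$ is an identity between automorphy factors that can be rearranged either way). Then given $G\in\mathcal{E}^{\B}_\lambda$, set $F = e^{-i\varphim}G$; the same intertwining identity, now in the form $\mixedSDelta(e^{-i\varphim}G) = e^{-i\varphim}\Delta_{\B}G$, shows $\mixedSDelta F = \lambda F$, and the backward direction of the lifting gives $F\in\mathcal{M}^{\nu,\mu}_{\tau}(\C)$, hence $F\in\mathcal{E}^{\nu,\mu}_{\tau;\lambda}$ with $\mathcal{W}^{\nu,\mu}_\tau F = G$. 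Combining the two inclusions yields the claimed equality.

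There is essentially no serious obstacle here: the statement is a formal consequence of two results already established, and the proof is a two-line bookkeeping argument on each side plus the remark that $\mathcal{W}^{\nu,\mu}_\tau$ is a bijection. The only point requiring a word of care is that the intertwining relation \eqref{interwin} and the lifting Theorem~\ref{Thm2} involve the \emph{same} function $\varphim$ — namely the real one selected in part~iii) of the Key Lemma — so one must make sure to invoke both with that common normalization; once this is noted, the operator identity $\Delta_{\B}\mathcal{W}^{\nu,\mu}_\tau = \mathcal{W}^{\nu,\mu}_\tau\mixedSDelta$ holds on the nose and transports eigenfunctions in both directions. I would also briefly remark that, since $\mathcal{W}^{\nu,\mu}_\tau$ is a unitary-type (pointwise unimodular) multiplication operator, it restricts to an isometric isomorphism between the relevant $L^2$-spaces, so the equality of eigenspaces descends to the Hilbert-space setting used in Section~6.
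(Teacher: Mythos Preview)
Your argument is correct and follows exactly the route the paper takes: the proof there is a one-line appeal to the intertwining relation in Key Lemma~\ref{keylem1:eigenproblem}~ii)--iii) together with the lifting Theorem~\ref{Thm2}, and you have simply unpacked both inclusions explicitly. Your added remarks about the common choice of $\varphim$ and the bijectivity of $\mathcal{W}^{\nu,\mu}_\tau$ are appropriate clarifications of points the paper leaves implicit.
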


\begin{proof}
	The proof 
	follows using $ii)$ and $iii)$ (intertwining) in Key Lemma  \ref{keylem1:eigenproblem} together with the lifting Theorem \ref{Thm2} (invariance).
	\end{proof}


The next result involves the $L^2$ eigenspaces of $\mixedSDelta $
defined by 
$$ A^{2,\nu,\mu,\tau}_{\lambda}(\C) = \mathcal{E}^{\nu,\mu}_{\tau;\lambda} \cap L^{2,\nu}_{\mu,\tau}(\C/\Gamma) 
.$$

\begin{proposition} \label{} 
	The range of the restriction of the transform $\mathcal{W}_{\tau}^{\mu,\nu}$ in   \eqref{TransW} to $A^{2,\nu,\mu,\tau}_{\lambda}(\C)$ is exactly the space of  $L^2$-$\Gamma$-automorphic functions $\mathcal{E}^{\B}_{\lambda} \cap L^{2,\B}_{0,Id_{\C}}(\C/\Gamma) $. In particular, the spectrum of $\mixedSDelta $ is purely discrete and given by the Landau levels $\lambda_k=-2\B(2k+1)$; $k=0,1,2, \cdots$. 
\end{proposition}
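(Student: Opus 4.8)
The plan is to combine the lifting Theorem~\ref{Thm2} (and its eigenspace refinement, Proposition~\ref{propMF}) with the known spectral theory of the Landau Hamiltonian $\Delta_{\B}$ on classical automorphic functions, exactly as developed in \cite{ghanmiintissar,elgourari2011spectral}. First I would observe that the weight functions match up under $\mathcal{W}_\tau^{\mu,\nu}$: since $[\mathcal{W}_\tau^{\mu,\nu}f](z) = e^{i\varphim(z)}f(z)$ with $\varphim$ \emph{real} (assertion $iii)$ of Key Lemma~\ref{keylem1:eigenproblem}), we have $|[\mathcal{W}_\tau^{\mu,\nu}f](z)|^2 = |f(z)|^2$ pointwise; hence I must check that the Gaussian density $e^{-\nu|z|^2 - \mu(|E\tau(z)|^2 - |\overline{E}\tau(z)|^2)}\,d\lambda(z)$ defining $L^{2,\nu}_{\mu,\tau}(\C/\Gamma)$ coincides with $e^{-\B|z|^2}\,d\lambda(z)$, i.e.\ that $\nu|z|^2 + \mu(|E\tau(z)|^2 - |\overline{E}\tau(z)|^2) = \B|z|^2$. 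Writing $E\tau = z\,\partial\tau/\partial z$ and $\overline{E}\tau = z\,\partial\tau/\partial\bar z$, this is precisely $|z|^2$ times the constancy statement $\nu + \mu(|\partial\tau/\partial z|^2 - |\partial\tau/\partial\bar z|^2) = \B$ from assertion $i)$ of the Key Lemma. So $\mathcal{W}_\tau^{\mu,\nu}$ is an isometry between $L^{2,\nu}_{\mu,\tau}(\C/\Gamma)$ and $L^{2,\B}_{0,\mathrm{Id}_\C}(\C/\Gamma)$, and intertwines the two quadratic-form domains.

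Next I would invoke Proposition~\ref{propMF}, which already identifies $\mathcal{W}_\tau^{\mu,\nu}(\mathcal{E}^{\nu,\mu}_{\tau;\lambda}) = \mathcal{E}^{\B}_\lambda$ at the level of smooth eigenfunctions; intersecting with the respective $L^2$ spaces and using the isometry just established gives
\[
\mathcal{W}_\tau^{\mu,\nu}\big(A^{2,\nu,\mu,\tau}_\lambda(\C)\big) = \mathcal{E}^{\B}_\lambda \cap L^{2,\B}_{0,\mathrm{Id}_\C}(\C/\Gamma),
\]
which is the first assertion. For the spectral statement I would then appeal to \cite{ghanmiintissar}: under the Riemann--Dirac quantization type condition on $(\Gamma;\B,\chi_\tau)$ — which holds here by Remark~\ref{CorRDQ}, given that $\chi$ is a pseudo-character so that $\mixedSAutomorphic(\C)$ is nontrivial (Proposition~\ref{prop:nontrivialSecondKind}) — the Landau Hamiltonian $\Delta_{\B}$ acting on $L^{2,\B}_{0,\mathrm{Id}_\C}(\C/\Gamma)$ has purely discrete spectrum consisting exactly of the Landau levels $\lambda_k = -2\B(2k+1)$, $k = 0,1,2,\dots$. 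Since $\mathcal{W}_\tau^{\mu,\nu}$ is a unitary equivalence carrying $\mixedSDelta$ to $\Delta_{\B}$ (the intertwining relation \eqref{interwin} conjugated by the unitary $e^{i\varphim}$), the spectrum of $\mixedSDelta$ on $L^{2,\nu}_{\mu,\tau}(\C/\Gamma)$ is the same, and each $A^{2,\nu,\mu,\tau}_{\lambda_k}(\C)$ is isomorphic to the $k$-th Landau eigenspace of the classical problem.

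The main obstacle I anticipate is not conceptual but book-keeping: one must be careful that $\mathcal{W}_\tau^{\mu,\nu}$ genuinely preserves not only the $L^2$ norm but the operator-domain structure, so that "eigenfunction of $\mixedSDelta$ in $L^2$" transfers to "eigenfunction of $\Delta_{\B}$ in $L^2$" with no loss of self-adjointness or essential-selfadjointness issues; this requires knowing that $e^{i\varphim}$ maps the domain of $\mixedSDelta$ onto that of $\Delta_{\B}$, which follows from \eqref{interwin} together with $\varphim$ being a bounded-derivative real smooth function on the compact quotient, so that multiplication by $e^{i\varphim}$ is a bounded invertible operator commuting suitably with the differential structure. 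Once this is recorded, the remaining step — quoting the explicit Landau-level description and its eigenprojector from \cite{ghanmiintissar} — is immediate, and one may additionally transport the closed-form expression of the eigenprojector back through $\mathcal{W}_\tau^{\mu,\nu}$ to obtain the stated explicit projector for $\mixedSDelta$ on the free space $L^2(\C,d\lambda)$.
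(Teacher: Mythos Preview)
Your proposal is correct and follows essentially the same route as the paper: invoke Proposition~\ref{propMF} for the eigenspace identification, use that $\varphim$ is real (Key Lemma~\ref{keylem1:eigenproblem}\,$iii)$) so that $\mathcal{W}_\tau^{\mu,\nu}$ is an isometry between the two weighted $L^2$ spaces, and then read off the spectrum from the known Landau picture in \cite{ghanmiintissar}. The paper's own proof is a two-line sketch asserting the isometry ``up to multiplication constant'' without justification; you go further and actually verify it by showing the two Gaussian weights agree via the constancy of $\B$ from Key Lemma~\ref{keylem1:eigenproblem}\,$i)$, which is the right computation (minor slip: $\overline{E}\tau=\bar z\,\partial\tau/\partial\bar z$, not $z\,\partial\tau/\partial\bar z$, but the moduli are the same). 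Your closing remarks on domain preservation and on transporting the eigenprojector are extra care the paper omits, and the latter really belongs to the proof of the next result (Theorem~\ref{SpecProperties}); neither affects the validity here.
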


\begin{proof}
	The proof follows by means of Proposition \ref{propMF}  together with the fact that the transform $ \mathcal{W}_{\tau}^{\mu,\nu}$  defines an isometric mapping from $L^{2,\nu}_{\mu,\tau}(\C/\Gamma)$ into $L^{2,\nu}(\C/\Gamma)$, up to multiplication constant, and $iii)$ in Key Lemma \ref{keylem1:eigenproblem}.
\end{proof}

Subsequently, for rank one of the discrete subgroup that we identify to $\Gamma =\Z$,
so that $\mathbb{C}/\Z)$ is a strip, the space $L^{2,\nu}_{\mu,\tau}(\C/\Gamma) $  possesses a Hilbertian orthogonal decomposition in terms of $A^{2,\B,\mu,\tau}_{\lambda_k}(\C)$ with $\lambda_k= -2\B(2k+1)$; $k=0,1,2, \cdots$. 
An orthogonal basis of each $A^{2,\nu,\mu,\tau}_{\lambda_k}(\C)$ can be shown to be given in terms of $m^{th}$ Hermite polynomial $H_m$ by the functions 
$$\psi_{m,n}^{\alpha,\B}(z,\overline{z}) = 
e^{i\varphim(z)} \psi_n^{\alpha,\B} (z)
H_m \left(({2\B})^{1/2} \Im(z) +  ({{2}/{\B}})^{1/2} \pi (n+\alpha) \right),
$$
for varying $m\in \Z^+$ and $m\in \Z$, where we have set
$$
\psi_n^{\alpha,\B} (z) :=  
e^{-\frac{\pi^2}{\B}(n+\alpha)^2 }
e^{\frac{\B}2 z^2  + 2i\pi (\alpha + n) z};
\quad n\in\Z.
$$

We conclude this section by providing the concrete description of spectral properties of the Laplacian $\mixedSDelta$ on the free Hilbert space $L^2(\C; d\lambda)$.  

\begin{theorem} \label{SpecProperties} 
	The point spectrum of $\mixedSDelta$ acting on $L^2(\C; d\lambda)$ is discrete and reduces to the Landau levels $\lambda_k=-2\B(2k+1)$; $k=0,1,\cdots$. Moreover, the corresponding 
	$L^2$ eigenspaces 
	$$A^{2}_{k}(\mixedSDelta)=\set{f\in L^2(\C; d\lambda);\quad \mixedSDelta f = \lambda_k  f}$$ leads to the orthogonal decomposition of $ L^2(\C;d\lambda)$ 
	and the explicit closed expression of their eigenprojector kernel $K^{\nu,\mu}_{\tau;k}$ are given by 
	\[K^{\nu,\mu}_{\tau;k}(z,w)= \frac{\B}{\pi} e^{\psi^{\nu,\mu}_\tau(z,w)}
	e^{i \B\Imaginaary \scal{z,w}} e^{-\frac{\B}{2}|z-w|^2} L_k(2\B  |z-w|^2)
	,\] where $L_k(x)=L_k^0(x)$ denotes the usual Laguerre polynomial.
\end{theorem}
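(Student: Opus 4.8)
The plan is to transport the whole spectral picture from the Landau Hamiltonian $\Delta_{\B}$ acting on the free Hilbert space $L^2(\C,d\lambda)$ over to $\mixedSDelta$ via the intertwining relation \eqref{interwin} in Key Lemma \ref{keylem1:eigenproblem}. Concretely, set $W f := e^{i\varphim}f$, which by $iii)$ of Key Lemma \ref{keylem1:eigenproblem} is multiplication by a unimodular function (since $\varphim$ is real-valued), hence a unitary operator on $L^2(\C,d\lambda)$; and by $ii)$ we have $\mixedSDelta = W^{-1}\Delta_{\B}W$. Therefore the point spectrum of $\mixedSDelta$ equals that of $\Delta_{\B}$, the eigenspaces satisfy $A^2_k(\mixedSDelta)=W^{-1}A^2_k(\Delta_{\B})$, and the orthogonal direct sum decomposition $L^2(\C,d\lambda)=\bigoplus_k A^2_k(\Delta_{\B})$ pulls back to the claimed decomposition for $\mixedSDelta$. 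So the first step is simply to invoke the known spectral theory of $\Delta_{\B}$: by \cite{ghanmiintissar} (the magnetic Laplacian / Landau Hamiltonian with constant field $\B$), the point spectrum is $\{\lambda_k=-2\B(2k+1)\}_{k\ge 0}$ (these being the Landau levels), each of infinite multiplicity, the eigenspaces are mutually orthogonal and span $L^2(\C,d\lambda)$, and the eigenprojector $P_k^{\B}$ onto $A^2_k(\Delta_{\B})$ has explicit kernel
\[
\mathcal{K}_k^{\B}(z,w)=\frac{\B}{\pi}\, e^{i\B\Imaginaary\scal{z,w}}\, e^{-\frac{\B}{2}|z-w|^2}\, L_k\big(2\B|z-w|^2\big),
\]
the classical formula for the projection onto the $k$-th Landau level.

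Next I would compute the kernel of the transported projector. Since $A^2_k(\mixedSDelta)=W^{-1}A^2_k(\Delta_{\B})$ and $W$ is unitary, the eigenprojector of $\mixedSDelta$ onto $A^2_k(\mixedSDelta)$ is $P_k^{\nu,\mu,\tau}=W^{-1}P_k^{\B}W$, whose integral kernel is obtained by conjugating the kernel of $P_k^{\B}$:
\[
K^{\nu,\mu}_{\tau;k}(z,w)=e^{-i\varphim(z)}\,\mathcal{K}_k^{\B}(z,w)\, \overline{e^{-i\varphim(w)}} = e^{-i\varphim(z)}e^{i\overline{\varphim(w)}}\,\mathcal{K}_k^{\B}(z,w).
\]
Because $\varphim$ is real, $\overline{\varphim(w)}=\varphim(w)$, so the prefactor is $e^{-i(\varphim(z)-\varphim(w))}$; to match the stated form one writes $\psi^{\nu,\mu}_\tau(z,w):=-i(\varphim(z)-\varphim(w))$ (equivalently, one uses the explicit formula \eqref{eq:varphimz} for $\partial\varphim/\partial z$ and $\partial\varphim/\partial\bar z$ to express this difference in closed form in terms of $z,w,\tau$). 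Substituting $\mathcal{K}_k^{\B}$ then yields exactly
\[
K^{\nu,\mu}_{\tau;k}(z,w)=\frac{\B}{\pi}\, e^{\psi^{\nu,\mu}_\tau(z,w)}\, e^{i\B\Imaginaary\scal{z,w}}\, e^{-\frac{\B}{2}|z-w|^2}\, L_k\big(2\B|z-w|^2\big),
\]
as claimed. The discreteness of the point spectrum, the fact that it reduces to the Landau levels, and the orthogonal decomposition of $L^2(\C;d\lambda)$ all transfer verbatim under the unitary $W$.

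The main obstacle, and the only genuinely non-formal point, is establishing that $W$ is a well-defined unitary on $L^2(\C,d\lambda)$ — i.e., that $\varphim$ is indeed globally real-valued and smooth on all of $\C$. This is precisely what $iii)$ of Key Lemma \ref{keylem1:eigenproblem} provides (the argument there being that $\overline{\theta^{\nu,\mu}_\tau}=-\theta^{\nu,\mu}_\tau$ and $\overline{\theta_{\B}}=-\theta_{\B}$ force $d\,\Imaginaary(\varphim)=0$, so the imaginary part is a constant that can be absorbed), and it rests on $i)$ of the same lemma, namely that $\B(z)$ is genuinely constant so that $\theta_{\B}$ makes sense as a global Landau gauge. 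A secondary bookkeeping point is to make the real-analytic prefactor $\psi^{\nu,\mu}_\tau(z,w)$ explicit and to check it is consistent with the normalization (in particular that $\psi^{\nu,\mu}_\tau(z,z)=0$, so that $K^{\nu,\mu}_{\tau;k}(z,z)=\B/\pi$, recovering the correct density of states $\B/\pi$ per Landau level). Everything else is a direct consequence of unitary conjugation applied to the already-known spectral resolution of the constant-field magnetic Laplacian.
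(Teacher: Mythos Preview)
Your proposal is correct and follows essentially the same route as the paper's proof: both arguments use parts $ii)$ and $iii)$ of Key Lemma~\ref{keylem1:eigenproblem} to identify the multiplication operator $W=e^{i\varphim}$ as a unitary on $L^2(\C,d\lambda)$ intertwining $\mixedSDelta$ with $\Delta_{\B}$, import the Landau spectrum and eigenprojector kernels from \cite{ghanmiintissar}, and conjugate the kernel by $W$ to obtain $e^{-i\varphim(z)}K_{\B}(z,w)e^{i\varphim(w)}$. Your write-up is in fact slightly cleaner in that you make the unitarity of $W$ explicit (the paper says ``isometric \ldots\ up to a multiplicative constant''), and your identification $\psi^{\nu,\mu}_\tau(z,w)=-i(\varphim(z)-\varphim(w))$ is the one actually consistent with the derived kernel, whereas the paper's subsequent Remark records $\psi^{\nu,\mu}_\tau(z,w)=\varphim(z)-\varphim(w)$, which appears to drop a factor of $i$.
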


\begin{proof}
	This is immediate thanks  to $ii)$ and $iii)$ in the Key Lemma \ref{keylem1:eigenproblem}, combined with  the observation that 
	the transform $ \mathcal{W}_{\tau}^{\mu,\nu}$ in \eqref{TransW} defines an isometric mapping of $L^2(\C; d\lambda)$, up to a multiplicative constant, so that  $\mixedSDelta f = \lambda  f $ for $f\in L^2(\C; d\lambda)$ becomes equivalent to  $ \Delta_{\B\!\!\!} \ g = \lambda g$   for $g=e^{i\varphim(z)} f\in L^2(\C; d\lambda)$. Moreover, if $K_{\B}$ denotes the reproducing kernel of the generalized Bargmann space  (\cite{ghanmiintissar}), we get 
	$$ e^{i\varphim(z)} f(z) =\int_{\C}  K_{\B}(z,w) e^{i\varphim(w)} f(w) d\lambda(w),$$
	which implies that $e^{-i\varphim(z)} K_{\B}(z,w) e^{i\varphim(w)} $ is the reproducing of  $A^{2}_{\lambda_k}(\mixedSDelta)$.
	We conclude for Theorem \ref{SpecProperties}  by making appeal to the results giving the spectrum and closed expression of $K_{\B}(z,w)$ in \cite{ghanmiintissar}. 
\end{proof}

\begin{remark}
	The eigenprojector kernel of $A^{2}_{k}(\mixedSDelta)$ satisfies the invariance property
	\begin{equation*}
		K^{\nu,\mu}_{\tau;k}(z,w)=
		e^{(\psi^{\nu,\mu}_\tau(g\cdot z,g\cdot w) \psi^{\nu,\mu}_\tau(z,w))} e^{i \B\Imaginaary (z-w,g^{-1}.0)}K^{\nu,\mu}_{\tau;k}(g.z,g.w); \, g\in G,
	\end{equation*}
	where $\psi^{\nu,\mu}_\tau(z,w):=\varphim(z)-\varphim(w)$.
\end{remark}

\section{Lifting theorem in high dimensions}

In the previous section, a one-dimensional lifting theorem was proved connecting mixed automorphic functions to classical automorphic functions.
It turns out that a generalization to high dimension fails in general. Below, we provide necessary and sufficient conditions on the equivariant map toward a lifting result in high dimensions. Thus, following the same scheme as in the 1-dimensional, the different constructions for mixed automorphic functions and the corresponding invariant magnetic Laplacians remain valid in high dimension. A discrete subgroup of the semi-direct group $ G = \U(n)\ltimes\C^n $, of the unitary group and $\C^n$, acts  by $ \gamma \cdot z = A z + b$ on $\C^n$,
for $\gamma:=[A,b]\in G$. Such action 
can be expanded component-wise as 
$ (\gamma\cdot z)_i = \sum_{j=1}^{n} a_{i,j} z_j + b_i,$
with $A:=(a_{i,j})_{1\leq i,j\leq n} \in U(n) $, $ b:=(b_i)_{1\leq i\leq n} \in\C^n$, and  $ z:=(z_1,\cdots,z_n)\in\C^n $.
As expected, the mixed automorphic functions 
$$ F(A z + b ) = J^{\nu,\mu}_{\rho,\tau}(\gamma,z) F(z)  = \chi(\gamma) j^\nu(\gamma, z) j^\mu(\rho(\gamma), \tau(z))  F(z)  $$
corresponds to the  equivariant pair $(\rho,\tau)$ such that  $\rho: G \longrightarrow  G$  is a $G$-endomorphism verifying  $\rho(\Gamma) \subset \Gamma$ for given discrete subgroup  $ \Gamma $ of $ G $,  and $\tau : \C^n \longrightarrow  \C^n$  a compatible $ \mathcal{C}^1 $ mapping  such that
\begin{equation}
	\label{eq:chap7:EquivCond}
	 \tau(g\cdot z)= \rho(g)\cdot\tau(z); \qquad g\in G, \quad z\in\C^n,
\end{equation}
with $\tau(z)= (\tau_1(z),\tau_2(z),\cdots,\tau_n(z))$.
In terms of the linear and translational counterparts of $\rho: \gamma \mapsto [\alpha(\gamma),\beta(\gamma)]$ with $ \alpha:=(\alpha_{i,j})_{i,j} $ is a matrix in $U(n)$  and $ \beta:=(\beta_i)_i $ is a column vector in $ \C^n $,
the condition \eqref{eq:chap7:EquivCond} 
reads equivalently as
 \begin{equation}\label{eq:equivarianceComponentsHighDim}
	\tau_\ell = \sum_{j=1}^{n} \alpha_{\ell,j}\tau_j+\beta_\ell.
\end{equation}
The components $\alpha_{i,j}$ and $\beta_{j}$ are functions in $ \gamma $.

The magnetic Schr\"odinger operator $
H_{\theta^{\nu,\mu}_\tau} =(d+ \operatorname{ext} \ \theta^{\nu,\mu}_\tau)^{*}(d+ \operatorname{ext} \ \theta^{\nu,\mu}_\tau) $ corresponding to the potential vector form 
\begin{equation}\label{eq:theta2}
	\theta^{\nu,\mu}_\tau : =-\frac{1}{2} \sum_{\ell=1}^{n}\Big\{\nu(\bar z_\ell dz_\ell -  z_\ell d\bar z_\ell) +\mu(\overline{\tau_\ell}d\tau_\ell - \tau_\ell d\overline{\tau_\ell}) \Big\} 
\end{equation}
is the one leaving invariant the space of mixed automorphic functions.   
The computations are quite straightforward and make use of the facts
\begin{equation}\label{keyx}
	\sum_{j=1}^{n} a_{j,k} \dfrac{\partial \tau_\ell}{\partial z_j}(\gamma\cdot z)=\sum_{j=1}^{n} \alpha_{\ell,j} \frac{\partial \tau_j}{\partial z_k}
	\quad \mbox{and} \quad 
	\sum_{j=1}^{n} \overbar{a_{j,k}} \dfrac{\partial \tau_\ell}{\partial \overbar{z_j}}(\gamma\cdot z)= \sum_{j=1}^{n} \alpha_{\ell,j} \frac{\partial \tau_j}{\partial \overbar{z_k}}
\end{equation}
for all $ k,\ell=1,2,\cdots,n $,
which follow by identification after differentiating the right-hand sides of the identities  obtained by differentiating left-hand side of \eqref{eq:equivarianceComponentsHighDim} and with the usage of chain rule, for functions of several complex variables.
However, the description of spectral analysis of the constructed Laplacian can not be handled by adopting similar approach as for one dimensional case. 
Here the corresponding magnetic field is not necessary constant, which was crucial in establishing the lifting Theorem \ref{Thm2}. 
We claim that the intertwining property between $ H_{\theta^{\nu,\mu}_\tau}  $ and $ \Delta_{\nu} $ does not hold in general in high dimensions. More precisely, we prove

\begin{theorem}
	The intertwining property for  $ H_{\theta^{\nu,\mu}_\tau}  $ and $ \Delta_{\nu} $ holds true if and only if and only if $  \abs{\partial_{\overbar{z_k}}\tau_\ell}^2 - \abs{\partial_{{z_k}}\tau_\ell}^2$ is constant and
	$$\det \left(\begin{array}{cc}
		\partial_{z_i}\overbar{\tau_\ell}  &    \partial_{z_j}\overbar{\tau_\ell} \\ 
		\partial_{z_i}{\tau_\ell}  & \partial_{z_j}{\tau_\ell}
	\end{array}\right)  = 
	\det \left(\begin{array}{cc}
		\partial_{z_i}\overbar{\tau_\ell}  &    \partial_{\overbar{z_j}}\overbar{\tau_\ell} \\ \partial_{z_i}{\tau_\ell}  &  \partial_{\overbar{z_j}}{\tau_\ell}
	\end{array}\right) =0 $$
for every $i,j,k,\ell=1,2, \cdots$. 
\end{theorem}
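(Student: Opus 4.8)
The plan is to reduce the claimed equivalence, exactly as in the one-dimensional Key Lemma~\ref{keylem1:eigenproblem}, to a statement about the curvature two-form $d\theta^{\nu,\mu}_\tau$. Recall that the intertwining property $H_{\theta^{\nu,\mu}_\tau} = e^{-i\varphi}\Delta_\nu e^{i\varphi}$ (for some real smooth $\varphi$) holds precisely when $\theta^{\nu,\mu}_\tau$ and the Landau gauge $\theta_\nu = -\tfrac{1}{2}\sum_\ell \nu(\bar z_\ell dz_\ell - z_\ell d\bar z_\ell)$ are cohomologous, i.e. differ by an exact $1$-form; by the Poincar\'e lemma on $\C^n$ this is equivalent to $d\theta^{\nu,\mu}_\tau = d\theta_\nu$. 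So the first step is to compute $d\theta^{\nu,\mu}_\tau$ explicitly from \eqref{eq:theta2}, writing it as $d\theta^{\nu,\mu}_\tau = \sum_{i,j} (P_{ij}\, dz_i\wedge d\bar z_j + Q_{ij}\, dz_i\wedge dz_j + \overline{Q_{ij}}\, d\bar z_i \wedge d\bar z_j)$ after carrying out the exterior differentiation of each term $\nu(\bar z_\ell dz_\ell - z_\ell d\bar z_\ell)$ and $\mu(\overline{\tau_\ell}d\tau_\ell - \tau_\ell d\overline{\tau_\ell})$ and expanding $d\tau_\ell = \sum_k (\partial_{z_k}\tau_\ell\, dz_k + \partial_{\bar z_k}\tau_\ell\, d\bar z_k)$.

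Second, I would match coefficients. The $dz_i\wedge d\bar z_j$ part of $d\theta_\nu$ contributes only the diagonal term $\nu\, dz_i\wedge d\bar z_i$, so equality of the $(1,1)$-parts forces, for the off-diagonal $i\ne j$ entries, $\sum_\ell\big(\overline{\partial_{z_i}\tau_\ell}\,\partial_{\bar z_j}\tau_\ell - \dots\big)=0$-type relations, and for the diagonal entries the condition that $\nu + \mu\sum_\ell\big(|\partial_{z_k}\tau_\ell|^2 - |\partial_{\bar z_k}\tau_\ell|^2\big)$ equals $\nu$ — equivalently that $|\partial_{\bar z_k}\tau_\ell|^2 - |\partial_{z_k}\tau_\ell|^2$ is (summed over $\ell$, but with the mixed-variable argument one can isolate) the constant appearing in the statement. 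Meanwhile the $dz_i\wedge dz_j$ and $d\bar z_i\wedge d\bar z_j$ parts of $d\theta_\nu$ vanish identically, so equality of the $(2,0)$- and $(0,2)$-parts forces $Q_{ij}=0$ for all $i,j$; computing $Q_{ij}$ from the $\mu$-terms yields precisely the two vanishing determinants $\det\!\big(\begin{smallmatrix}\partial_{z_i}\overline{\tau_\ell} & \partial_{z_j}\overline{\tau_\ell}\\ \partial_{z_i}\tau_\ell & \partial_{z_j}\tau_\ell\end{smallmatrix}\big)=0$ and $\det\!\big(\begin{smallmatrix}\partial_{z_i}\overline{\tau_\ell}&\partial_{\bar z_j}\overline{\tau_\ell}\\\partial_{z_i}\tau_\ell&\partial_{\bar z_j}\tau_\ell\end{smallmatrix}\big)=0$ after grouping the $dz_i\wedge dz_j$ and $dz_i\wedge d\bar z_j$ contributions respectively. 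The converse direction is then immediate: if these conditions hold, the computed $d\theta^{\nu,\mu}_\tau$ collapses to $\nu\sum_k dz_k\wedge d\bar z_k = d\theta_\nu$, hence $\theta^{\nu,\mu}_\tau - \theta_\nu$ is closed, hence exact, hence equals $i\,d\varphi$; the reality of $\varphi$ follows as in Key Lemma~\ref{keylem1:eigenproblem} from $\overline{\theta^{\nu,\mu}_\tau}=-\theta^{\nu,\mu}_\tau$ and $\overline{\theta_\nu}=-\theta_\nu$, which force the purely imaginary part of $\varphi$ to be constant.

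The bookkeeping point I expect to need care with is the identification of the correct index ranges: the antiholomorphic-derivative term $\mu\,\overline{\tau_\ell}\,d\tau_\ell$ produces, upon differentiation, a term $\mu\, d\overline{\tau_\ell}\wedge d\tau_\ell$, whose expansion mixes all of $dz_i\wedge dz_j$, $dz_i\wedge d\bar z_j$, and $d\bar z_i\wedge d\bar z_j$; one must organize these so that the $2\times2$ determinants emerge cleanly. It is also worth noting that the determinants as written carry a free index $\ell$ but the genuine constraint is the vanishing of the $\ell$-summed quantity $\sum_\ell\det(\cdots)$; however, because the equivariant relations \eqref{keyx} let one evaluate at $\gamma\cdot z$ and, by transitivity of the $G$-action and a polarization/identification argument as used already in the proof of Key Lemma~\ref{keylem1:eigenproblem}, one deduces the componentwise vanishing for each $\ell$ separately — this is the step where I would lean on the high-dimensional analogues of \eqref{eq:handyeq1}--\eqref{eq:handyeq4} recorded in \eqref{keyx}.

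The main obstacle, then, is not any single computation but rather making the passage from ``the curvature two-form is constant and of Landau type'' to ``each determinant vanishes pointwise for each $\ell$'' rigorous: one must be sure that the algebraic relations extracted from $d\theta^{\nu,\mu}_\tau = d\theta_\nu$ genuinely decouple over $\ell$ and do not merely give the summed identities. If that decoupling cannot be justified directly, the honest statement is the summed one, and I would then either weaken the theorem accordingly or invoke the additional structural hypothesis (e.g. $\rho$ separating variables, cf. Theorem~\ref{thm::rauTildTauTild}) under which $\tau$ has block-diagonal Jacobian and the decoupling is automatic.
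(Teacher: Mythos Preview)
Your approach is essentially identical to the paper's: both compute $d\theta^{\nu,\mu}_\tau$ by expanding $d(\overline{\tau_\ell}\,d\tau_\ell - \tau_\ell\,d\overline{\tau_\ell})$ into its bidegree components, organize the coefficients as $2\times 2$ determinants in the partial derivatives of $\tau_\ell$, and then read off the vanishing of the $(2,0)$ and mixed $(1,1)$ determinants together with constancy of the diagonal terms as the precise condition for the curvature to be of Landau type. The paper phrases the endpoint as ``$d\theta^{\nu,\mu}_\tau$ is a K\"ahler $2$-form'' rather than invoking the Poincar\'e lemma explicitly, but the content is the same.

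Your caution about the $\ell$-decoupling is well placed and worth flagging: the paper's own proof writes the conditions $A_{\tau_\ell,ij}=B_{\tau_\ell,ij}=0$ and $F_{\tau_\ell,k}$ constant with a free index $\ell$, yet the quantity that actually appears in $d\theta^{\nu,\mu}_\tau$ is $\sum_\ell(\cdots)$, and the passage from the summed constraint to the componentwise one is not justified there either. So on this point you are at least as careful as the source; your suggestion to invoke the equivariance relations \eqref{keyx} and transitivity of the $G$-action to force the decoupling is a reasonable line, though neither you nor the paper carries it out in full.
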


\begin{proof}
	For the proof, let us focus on the factor $\omega_\tau(z):=d(\overline{\tau_\ell}d\tau_\ell - \tau_\ell d\overline{\tau_\ell})$, where $ \tau $ is present.  
It will be of importance in the squeal to express the information within the equivariant condition~\eqref{eq:chap7:EquivCond} on the partial derivatives of the components of $ \tau $. 
Therefore, we get  
	\begin{align*}
	\omega_\tau(z) 
	&= \sum_{\ell=1}^{n} \sum_{k=1}^{n} F_{\tau_\ell,k} (z) d\overbar{z_k}\wedge d\overbar{z_k}   
	 \\&+ 
	\sum_{\ell=1}^{n} \Big( 	\sum_{j=1}^{n}\sum_{i=1}^{j-1}  A_{\tau_\ell,ij} dz_i\wedge dz_j 
	+  B_{\tau_\ell,ij} dz_i\wedge d\overbar{z_j} + C_{\tau_\ell,ij} d\overbar{z_i}\wedge dz_j 
	+  D_{\tau_\ell,ij}  d\overbar{z_i}\wedge d\overbar{z_j} 
	\Big).
\end{align*}
where the quantities $A_{\tau_\ell,ij}$, $B_{\tau_\ell,ij}$, $C_{\tau_\ell,ij}$, $D_{\tau_\ell,ij}$ and $F_{\tau_\ell,k}$ are given respectively by 
$$ A_{\tau_\ell,ij}:=\partial_{z_i}\overbar{\tau_\ell} \partial_{z_j}{\tau_\ell}
- \partial_{z_j}\overbar{\tau_\ell} \partial_{z_i}{\tau_\ell}
=  \det \left(\begin{array}{cc}
	\partial_{z_i}\overbar{\tau_\ell}  &    \partial_{z_j}\overbar{\tau_\ell} \\ 
	\partial_{z_i}{\tau_\ell}  & \partial_{z_j}{\tau_\ell}
\end{array}\right)  
$$
$$
B_{\tau_\ell,ij}:= \partial_{z_i}\overbar{\tau_\ell} \partial_{\overbar{z_j}}{\tau_\ell}
- \partial_{\overbar{z_j}}\overbar{\tau_\ell} \partial_{z_i}{\tau_\ell}
= \det \left(\begin{array}{cc}
	\partial_{z_i}\overbar{\tau_\ell}  &    \partial_{\overbar{z_j}}\overbar{\tau_\ell} \\ \partial_{z_i}{\tau_\ell}  &  \partial_{\overbar{z_j}}{\tau_\ell}
\end{array}\right)
$$
$$C_{\tau_\ell,ij} :=  \partial_{\overbar{z_i}}\overbar{\tau_\ell} \partial_{z_j}{\tau_\ell}
- \partial_{z_j}\overbar{\tau_\ell} \partial_{\overbar{z_i}}{\tau_\ell}
= \det \left(\begin{array}{cc}	\partial_{\overbar{z_i}}\overbar{\tau_\ell}   &    \partial_{z_j}\overbar{\tau_\ell} \\ \partial_{\overbar{z_i}}{\tau_\ell}  &  \partial_{z_j}{\tau_\ell}
\end{array}\right)
$$
$$ D_{\tau_\ell,ij} := \partial_{\overbar{z_i}}\overbar{\tau_\ell} \partial_{\overbar{z_j}}{\tau_\ell}
- \partial_{\overbar{z_j}}\overbar{\tau_\ell} \partial_{\overbar{z_i}}{\tau_\ell}
=  \det \left(\begin{array}{cc}
\partial_{\overbar{z_i}}\overbar{\tau_\ell}   &    \partial_{\overbar{z_j}}\overbar{\tau_\ell} \\ \partial_{\overbar{z_i}}{\tau_\ell}  &   \partial_{\overbar{z_j}}{\tau_\ell}
\end{array}\right) 
$$
$$ F_{\tau_\ell,k} := \abs{\partial_{\overbar{z_k}}\tau_\ell}^2 - \abs{\partial_{{z_k}}\tau_\ell}^2
=
 \det \left(\begin{array}{cc}
	\partial_{{z_k}}\overbar{\tau_\ell}   &    \partial_{\overbar{z_k}}\overbar{\tau_\ell} \\ \partial_{{z_k}}{\tau_\ell}  &   \partial_{\overbar{z_k}}{\tau_\ell}
\end{array}\right).$$
It is worth to observe that 
$ D_{\tau_\ell,ij} = - \overline{A_{\tau_\ell,ij}}$ and $C_{\tau_\ell,ij} = - \overline{B_{\tau_\ell,ij}}$. 
Therefore,  the operator $ H_{\theta^{\nu,\mu}_\tau}  $ is associated with a constant magnetic field if and only if $ d\theta^{\nu,\mu}_\tau $ is a K\"ahler 2-form. This is equivalent to $A_{\tau_\ell,ij}= B_{\tau_\ell,ij}=0$ and $F_{\tau_\ell,k}$ is constant.
\end{proof}

\begin{corollary}
	The Lifting theorem holds true for holomorphic equivariant mapping $\tau$ such that $\abs{\partial_{{z_j}}\tau_\ell}^2$ is constant and   $ \partial_{\overbar{z_j}}\overbar{\tau_\ell} \partial_{z_i}{\tau_\ell}=0 $ for every $j,k,\ell=1,2, \cdots$. 
\end{corollary}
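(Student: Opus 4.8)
The plan is to obtain the corollary as a direct specialization of the theorem just proved, whose determinantal criterion reads: the intertwining of $H_{\theta^{\nu,\mu}_\tau}$ with the constant-field operator holds exactly when $A_{\tau_\ell,ij}=B_{\tau_\ell,ij}=0$ and $F_{\tau_\ell,k}$ is constant, equivalently when $d\theta^{\nu,\mu}_\tau$ is a constant Kähler $2$-form. Since by hypothesis $\tau$ is holomorphic, it therefore suffices to check that, under this holomorphicity, those three conditions collapse to precisely the two hypotheses stated in the corollary, and then to run the one-dimensional lifting scheme.

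First I would record the vanishing forced by holomorphicity: for each component $\partial_{\overbar{z_k}}\tau_\ell=0$, and upon conjugating $\partial_{z_k}\overbar{\tau_\ell}=0$, for all $k,\ell$. Substituting into $A_{\tau_\ell,ij}=\partial_{z_i}\overbar{\tau_\ell}\,\partial_{z_j}{\tau_\ell}-\partial_{z_j}\overbar{\tau_\ell}\,\partial_{z_i}{\tau_\ell}$, both antiholomorphic factors $\partial_{z_i}\overbar{\tau_\ell}$ and $\partial_{z_j}\overbar{\tau_\ell}$ are annihilated, so $A_{\tau_\ell,ij}\equiv 0$ with no further assumption. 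Likewise $F_{\tau_\ell,k}=\abs{\partial_{\overbar{z_k}}\tau_\ell}^2-\abs{\partial_{z_k}\tau_\ell}^2=-\abs{\partial_{z_k}\tau_\ell}^2$, which is constant exactly when $\abs{\partial_{z_j}\tau_\ell}^2$ is constant, the first hypothesis. Finally, in $B_{\tau_\ell,ij}=\partial_{z_i}\overbar{\tau_\ell}\,\partial_{\overbar{z_j}}{\tau_\ell}-\partial_{\overbar{z_j}}\overbar{\tau_\ell}\,\partial_{z_i}{\tau_\ell}$ the first term drops (it carries the annihilated factors $\partial_{z_i}\overbar{\tau_\ell}$ and $\partial_{\overbar{z_j}}\tau_\ell$), leaving $B_{\tau_\ell,ij}=-\,\partial_{\overbar{z_j}}\overbar{\tau_\ell}\,\partial_{z_i}{\tau_\ell}$, which vanishes precisely under the second hypothesis.

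With $A_{\tau_\ell,ij}=B_{\tau_\ell,ij}=0$ and $F_{\tau_\ell,k}$ constant now in force, the foregoing Theorem gives that $d\theta^{\nu,\mu}_\tau$ is a constant Kähler $2$-form, hence a constant magnetic field. From here I would transfer the one-dimensional argument verbatim: denoting by $\vartheta$ the standard constant-field gauge, $d(\theta^{\nu,\mu}_\tau-\vartheta)=0$ on the simply connected $\C^n$, so Poincaré's lemma supplies a smooth real $\varphi$ (the multivariable analogue of $\varphim$ in $iii)$ of Key Lemma \ref{keylem1:eigenproblem}) with $\theta^{\nu,\mu}_\tau=\vartheta+i\,d\varphi$, whence $e^{-i\varphi}(d+\operatorname{ext}\vartheta)e^{i\varphi}=d+\operatorname{ext}\theta^{\nu,\mu}_\tau$. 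The multiplier transform $f\mapsto e^{i\varphi}f$, exactly as in Theorem \ref{Thm2} and Proposition \ref{prop:invariance}, then carries the mixed-automorphic eigenvalue problem to the classical-automorphic one; this is what ``the lifting theorem holds'' means here.

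The computational core is thus entirely routine, holomorphicity collapsing three of the four determinants for free and reducing the rest to the two stated conditions. The one point needing genuine care is the passage from ``constant field'' to ``lifting theorem'': one must confirm that the whole one-dimensional apparatus transfers, namely that the higher-dimensional mixed Laplacian stays invariant under the automorphic representation and, most delicately, that the twisted multiplier $\widehat{\chi_\tau}$ is again independent of $z$, so that it defines a bona fide pseudo-character $\chi_\tau$ on $\Gamma$. I expect this last $z$-independence check, the multivariable analogue of Key Lemma \ref{LemTransMixed}, to be the main obstacle, since it is precisely there that the constancy of the field, guaranteed by the corollary's hypotheses, is actually consumed.
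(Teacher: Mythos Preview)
Your proposal is correct and in fact more explicit than the paper, which states the corollary without proof as an immediate specialization of the preceding theorem; your reduction of the determinantal conditions under holomorphicity is exactly the intended mechanism. Your further observation that the Section~7 theorem only yields the intertwining property, while the full lifting statement also needs the higher-dimensional analogue of Key Lemma~\ref{LemTransMixed} (the $z$-independence of $\widehat{\chi_\tau}$), is well taken and goes beyond what the paper spells out.
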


\bibliographystyle{abbrv}

\end{document}